\documentclass[12pt]{amsart}
\usepackage{amsmath,amssymb}
\evensidemargin 4mm
\oddsidemargin 4mm
\textwidth 155mm
\textheight 205mm
\title[Examples of groups which are not weakly amenable]
{\large E\lowercase{xamples of groups which are not weakly amenable}}
\author[N. Ozawa]{N\lowercase{arutaka} OZAWA}
\address{Department of Mathematical Sciences,
University of Tokyo, \mbox{153-8914}}
\email{narutaka@ms.u-tokyo.ac.jp}
\thanks{Partially supported by JSPS and Sumitomo Foundation}
\subjclass{Primary 43A22; Secondary 22D15, 46L10}

\keywords{weak amenability, infinite amenable normal subgroup}
\date{December 01, 2010}
\newtheorem{thm}{Theorem}
\newtheorem{lem}[thm]{Lemma}
\newtheorem{prop}[thm]{Proposition}
\newtheorem{cor}[thm]{Corollary}
\newtheorem{thmA}{Theorem}

\theoremstyle{definition}
\newtheorem*{defn}{Definition}

\newcommand{\IR}{{\mathbb R}}

\newcommand{\IN}{{\mathbb N}}
\newcommand{\IZ}{{\mathbb Z}}

\newcommand{\IB}{{\mathbb B}}
\newcommand{\acts}{\curvearrowright}
\newcommand{\G}{\Gamma}

\newcommand{\p}{\varphi}
\newcommand{\cA}{{\mathcal A}}
\newcommand{\cG}{{\mathcal G}}
\newcommand{\cH}{{\mathcal H}}
\newcommand{\cL}{{\mathcal L}}
\newcommand{\cN}{{\mathcal N}}
\newcommand{\cU}{{\mathcal U}}
\DeclareMathOperator*{\Lim}{Lim}
\DeclareMathOperator{\Ad}{Ad}
\newcommand{\id}{\mathrm{id}}
\newcommand{\cb}{\mathrm{cb}}
\newcommand{\SL}{\mathrm{SL}}
\newcommand{\Sp}{\mathrm{Sp}}
\newcommand{\vt}{\mathbin{\bar{\otimes}}}
\newcommand{\ip}[1]{\mathopen{\langle}#1\mathclose{\rangle}}
\begin{document}
\begin{abstract}
We prove that weak amenability of a locally compact group imposes
a strong condition on its amenable closed normal subgroups.
This extends non weak amenability results of Haagerup (1988) and Ozawa--Popa (2010).
A von Neumann algebra analogue is also obtained.
\end{abstract}
\maketitle
\section{Introduction}
Let $G$ be a group, which is always assumed to be a locally compact topological group.
The group $G$ is said to be \emph{weakly amenable} if
the Fourier algebra $\cA G$ of $G$ has an approximate identity $(\p_n)$
which is uniformly bounded as Herz--Schur multipliers.
(If one requires $(\p_n)$ to be bounded as elements in $\cA G$,
it becomes one of the equivalent definitions of amenability.)
See Section~\ref{sec:schur} for the precise definition. Weak amenability is strictly
weaker than amenability and passes to closed subgroups.
It is proved by De Canni\`ere--Haagerup, Cowling and Cowling--Haagerup (\cite{dch,cowling,ch})
that real simple Lie groups of real rank one are weakly amenable (see also \cite{hypcbap}),
and by Haagerup (\cite{haagerup:u}) that real simple Lie groups of real rank at least two
are not weakly amenable. For the latter fact, Haagerup proves that $\SL(2,\IR)\ltimes\IR^2$
is not weakly amenable. (See also \cite{dorofaeff}.)
More recently, it is proved by Ozawa--Popa (\cite{cartan}) that
the wreath product $\Lambda\wr\G$ of a non-trivial group $\Lambda$
by a non-amenable discrete group $\G$ is not ``weakly amenable with constant $1$.''
In this paper, we generalize these non weak amenability results as follows.

\begin{thmA}\label{thmA}
Let $G$ be an weakly amenable group and $N$ be an amenable closed normal
subgroup of $G$. Then, there is a $G\ltimes N$-invariant state on
$L^\infty(N)$, where the semidirect product $G\ltimes N$
acts on $N$ by $(g,a)\cdot x= gaxg^{-1}$.
\end{thmA}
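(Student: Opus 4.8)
The plan is to recast the conclusion as a fixed-point problem and then feed in weak amenability through Herz--Schur multipliers. First I would observe that, since $N$ is amenable, the set $M\subseteq L^\infty(N)^*$ of means invariant under left translation by $N$ is nonempty, convex and weak-$*$ compact. The group $G$ acts on $L^\infty(N)$ by $(\alpha_g f)(x)=f(g^{-1}xg)$, and a direct computation gives the intertwining relation $L_a\circ\alpha_g=\alpha_g\circ L_{g^{-1}ag}$, where $L_a$ denotes left translation by $a\in N$; consequently $\alpha_g$ carries $N$-translation-invariant means to $N$-translation-invariant means, so $G$ acts on $M$ by $g\cdot m=m\circ\alpha_{g^{-1}}$. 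Since $(g,a)=(g,e)(e,a)$ in $G\ltimes N$, left translations and conjugations together generate the action, and so a $G$-fixed point of $M$ is exactly a $G\ltimes N$-invariant state on $L^\infty(N)$. Thus it suffices to produce such a fixed point.

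Next I would invoke weak amenability to obtain a net $(\p_i)$ in $\cA G$, which may be taken compactly supported, with $\sup_i\|\p_i\|_{\cb}\le\Lambda<\infty$ and $\p_i\to1$ uniformly on compact sets. Using the Gilbert--Bo\.zejko--Fendler representation I would write
\[
\p_i(y^{-1}x)=\ip{\xi_i(x),\eta_i(y)},\qquad x,y\in G,
\]
for bounded continuous maps $\xi_i,\eta_i\colon G\to\cH_i$ with $\|\xi_i\|_\infty\|\eta_i\|_\infty\le\Lambda$. The point I would exploit is that conjugation is an automorphism of $G$, so the conjugated multiplier $\p_i^g=\p_i(g\,\cdot\,g^{-1})$ again satisfies $\|\p_i^g\|_{\cb}\le\Lambda$ and $\p_i^g\to1$ uniformly on compacta.

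Then I would realise $L^\infty(N)\subseteq\mathcal{B}(L^2(N))$ as multiplication operators, with $N$ acting by the left regular representation $\lambda$ and $G$ acting by the unitaries $U_g$ implementing $\Ad g$, so that $U_gM_fU_g^*=M_{\alpha_g f}$ and $U_g\lambda(a)U_g^*=\lambda(gag^{-1})$. Fixing a Reiter net $(h_\alpha)$ of almost-invariant unit vectors for $\lambda$, I would assemble the Gilbert data $\xi_i,\eta_i$ with $h_\alpha$ into a net of functionals $\psi_{i,\alpha}$ on $\mathcal{B}(L^2(N))$ whose restrictions to $L^\infty(N)$ are, asymptotically, states, and which are built so that $\psi_{i,\alpha}(M_{L_af})-\psi_{i,\alpha}(M_f)\to0$ along the Reiter net, while the conjugation defect $\psi_{i,\alpha}(M_{\alpha_g f})-\psi_{i,\alpha}(M_f)$ is governed by $\p_i^g-\p_i$ on the compact support of $f$. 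Taking an iterated weak-$*$ limit $\Lim_i\Lim_\alpha$, the inner limit forces membership in $M$ (translation invariance from the Reiter net) and the outer limit forces $G$-invariance (conjugation invariance from $\p_i\to1$), producing the desired fixed point.

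The hard part will be the construction and analysis of the functionals $\psi_{i,\alpha}$. One cannot hope for unit vectors that are almost invariant under $\lambda(N)$ and all the $U_g$ simultaneously, since that would already yield amenability of the conjugation action $G\acts N$, which is strictly stronger than the conclusion; weak amenability only supplies invariance at the level of completely bounded multipliers. The whole difficulty is therefore to remain at the level of states: the uniform bound $\Lambda$ must be used to guarantee that the $\psi_{i,\alpha}$ are uniformly bounded, hence possess weak-$*$ cluster points, whereas mere boundedness is not enough to ensure that the limit is a genuine state rather than a signed functional, so the operator-space structure behind the $\Lambda$-bound (and not just $\|\p_i\|_\infty$) must be exploited to recover positivity and normalisation. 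Getting the two limits to cooperate, and handling the modular function of $\Ad g$ on $N$ in the non-unimodular locally compact case, are the remaining technical obstacles. This argument is meant to unify Haagerup's treatment of $\SL(2,\IR)\ltimes\IR^2$ and the Ozawa--Popa wreath-product argument, which are the special cases where $N$ is abelian.
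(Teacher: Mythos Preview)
Your reduction to finding a $G$-fixed point among the $N$-left-invariant means is correct and matches the paper's framing. The gap is in the mechanism you propose for conjugation invariance. You say the defect $\psi_{i,\alpha}(M_{\alpha_g f})-\psi_{i,\alpha}(M_f)$ is ``governed by $\p_i^g-\p_i$ on the compact support of $f$'' and that the outer limit then forces $G$-invariance ``from $\p_i\to1$.'' But $\p_i^g-\p_i\to0$ uniformly on compacta is automatic for \emph{any} approximate identity in $\cA G$, weakly amenable or not; if that were really what drove the argument, the cb-bound $\Lambda$ would never enter except to guarantee weak-$*$ cluster points, and as you yourself note, boundedness alone cannot produce a nonzero positive limit. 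So in your sketch there is no step at which the Herz--Schur structure is genuinely used, and the ``hard part'' you flag is in fact the entire content of the theorem.

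The paper's mechanism is different and rests on two ingredients absent from your outline. First (Lemma~\ref{lem:lmod}), amenability and normality of $N$ allow one to factor $q\circ m_\p\colon C^*_\lambda G\to C^*_\lambda(G/N)$ through a Stinespring dilation and obtain an $N$-\emph{equivariant} Gilbert representation: $\p(y^{-1}x)=\ip{\xi(x),\eta(y)}$ with $\xi(ax)=\pi(a)\xi(x)$ for a unitary $N$-representation $\pi$. Second (Proposition~\ref{prop:aap}), one works with an $N$-\emph{optimal} approximate identity, one minimising $\|(\p_n|_N)\|_{\cb}$ subject to $\|(\p_n)\|_{\cb}=\Lambda_{\cb}(G)$; a parallelogram argument applied to $(\p_n+\p_n^g)/2$ then forces $\|(\p_n-\p_n\circ\Ad_g)|_N\|_{\cb}\to0$ and $\|(\p_n-\p_n^a)|_N\|_{\cb}\to0$. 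This is convergence in the Herz--Schur norm on $N$, not pointwise, and it is exactly here that weak amenability does work. The state is then extracted by setting $\omega_n=\p_n|_N\in(\cL N)_*$, taking $\zeta_n=|\omega_n|^{1/2}\in L^2(N)$, transferring the norm convergence to $\zeta_n$ via Powers--St{\o}rmer, and passing to a weak-$*$ limit of $\zeta_n^2$ in $L^\infty(N)^*$; no separate Reiter net is needed, since translation invariance comes from the same optimality argument.
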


In particular, the wreath product by a non-amenable group is never weakly amenable.
The theorem also gives a new proof of Haagerup's result that
$\SL(2,\IZ)\ltimes\IZ^2$ is not weakly amenable, without appealing to
the lattice embedding into $\SL(2,\IR)\ltimes\IR^2$.
We note for the sake of completeness that there is an even weaker variant of
weak amenability, called the \emph{approximation property} (\cite{hk}),
and $\SL(2,\IR)\ltimes\IR^2$ has the approximation property,
while $\SL(n\geq 3,\IR)$ does not (\cite{ls}).

As Theorem 3.5 in \cite{cartan}, there is an analogous result for von Neumann algebras.
We refer to Section~3 in \cite{cartan} and Section~\ref{sec:b} of this paper for the terminology
used in the following theorem.
\begin{thmA}\label{thmB}
Let $M$ be a finite von Neumann algebra with
the weak$^*$ completely bounded approximation property.
Then, every amenable von Neumann subalgebra $P$ is weakly compact in $M$.
\end{thmA}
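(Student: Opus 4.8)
The plan is to produce, from the two hypotheses, the net of unit vectors in $L^2(P\vt\bar P)_+$ required by the definition of weak compactness (Section~3 of \cite{cartan}): vectors $\eta_i$ that are asymptotically invariant under the diagonal conjugation by the normalizer $\cN_M(P)$ and whose two marginals converge to the trace $\tau$ on $M$. The weak$^*$ completely bounded approximation property supplies a net $(\p_i)$ of normal, finite-rank, completely bounded maps on $M$ with $\sup_i\|\p_i\|_{\cb}<\infty$ and $\p_i\to\id$ in the point-$\sigma$-weak topology, while amenability (injectivity) of $P$ supplies a $P$-central averaging process, namely a hypertrace, equivalently a net of almost central vectors for the inner conjugation $\cU(P)\acts P$. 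The two ingredients play complementary roles: the maps $\p_i$ will give approximate invariance under the \emph{outer} part of $\cN_M(P)\subseteq\cU(M)$, and the amenability of $P$ will absorb the \emph{inner} part and, crucially, force genuine positivity together with the correct marginals.

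First I would translate the maps $\p_i$ into Hilbert-space data. Using the finite trace, a normal finite-rank completely bounded map $\p\colon M\to M$ is represented by an element $\xi_\p\in L^2(M\vt\bar M)$: if $\p(x)=\sum_k a_k x b_k^*$ then $\xi_\p=\sum_k a_k\otimes\bar b_k$, and $\xi_\p$ implements $\p$ on $L^2(M)$ via $a\otimes\bar b\colon\hat x\mapsto\widehat{axb^*}$. The bound $\|\p\|_{\cb}\le\Lambda$ translates into a uniform bound on the functional $T\mapsto\langle T\xi_\p,\xi_\p\rangle$ on $B(L^2 M)$, and, after replacing each $\p_i$ by a Wittstock/Stinespring completely positive majorant (or by passing to $\p_i^\sharp\circ\p_i$ and renormalising), one may take the associated vectors in the positive cone $L^2(M\vt\bar M)_+$. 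Because conjugating $\xi_\p$ by a diagonal unitary $u\otimes\bar u$ corresponds to replacing $\p$ by $\Ad(u)\circ\p\circ\Ad(u^*)$, and $\Ad(u)\circ\id\circ\Ad(u^*)=\id$, the convergence $\p_i\to\id$ yields
\[
\bigl\|(u\otimes\bar u)\,\xi_{\p_i}\,(u\otimes\bar u)^*-\xi_{\p_i}\bigr\|_2\longrightarrow0
\]
for every fixed $u\in\cU(M)$, in particular for every $u\in\cN_M(P)$; the cb bound is what controls the cross terms in this estimate uniformly.

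Next I would bring these vectors into the corner $P\vt\bar P$ and fix the marginals, which is where amenability of $P$ enters. Applying the trace-preserving, $\cU(P)$-equivariant averaging coming from injectivity of $P$ (concretely, compose with the conditional expectation $L^2(M\vt\bar M)\to L^2(P\vt\bar P)$ and average the inner $\cU(P)$-action against an invariant mean), one obtains vectors $\eta_i\in L^2(P\vt\bar P)_+$ which remain asymptotically invariant under $\cN_M(P)$, since equivariance preserves the estimate of the previous paragraph, and whose marginals one normalises to converge to $\tau$ on $M$; positivity survives because both the expectation and the averaging are positive. A joint diagonal limit over the two nets then produces the required net witnessing weak compactness of $P$ in $M$.

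The main obstacle is the compatibility of positivity with the completely bounded, hence a priori non-positive, data: the passage from the cb kernels $\xi_{\p_i}$ to vectors in the positive cone $L^2(P\vt\bar P)_+$ must be carried out without destroying either the uniform bound or the approximate $\cN_M(P)$-invariance, and simultaneously the inner $\cU(P)$-invariance coming from amenability must be merged with the outer invariance coming from $(\p_i)$ into a single net. Controlling the normalisation to unit vectors with marginals converging exactly to $\tau$ while only the weak convergence $\p_i\to\id$ is available is the delicate quantitative point; I expect this to require a careful simultaneous use of the cb bound and the invariant mean, rather than either ingredient alone.
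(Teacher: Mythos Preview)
Your central claim---that $\p_i\to\id$ point-ultraweakly together with the uniform bound $\sup_i\|\p_i\|_{\cb}<\infty$ forces
\[
\bigl\|(u\otimes\bar u)\,\xi_{\p_i}\,(u\otimes\bar u)^*-\xi_{\p_i}\bigr\|_2\longrightarrow0
\]
for every $u\in\cN_M(P)$---is not justified, and in fact this is exactly the step where the real work lies. Conjugating $\xi_\p$ by $u\otimes\bar u$ corresponds, as you say, to replacing $\p$ by $\Ad_u\circ\p\circ\Ad_{u^*}$; but all you know is that $\p_i-\Ad_u\circ\p_i\circ\Ad_{u^*}\to0$ in the \emph{point-ultraweak} topology, which at the level of the associated functionals $\mu_{\p_i}(a\otimes\bar b)=\tau(\p_i(a)b^*)$ yields only weak$^*$ convergence $\mu_{\p_i}-\mu_{\p_i}\circ\Ad_{u\otimes\bar u}\to0$, not norm convergence. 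The cb bound gives a uniform bound on $\|\mu_{\p_i}\|$, but boundedness plus weak$^*$ convergence to zero does not upgrade to norm convergence, and norm convergence is precisely what is needed to pass to the square roots via Powers--St{\o}rmer. (Incidentally, your vector $\xi_\p\in L^2(M\vt\bar M)$ is not well-defined from $\p$ alone, and its $L^2$-norm is not controlled by $\|\p\|_{\cb}$; the object that \emph{is} canonically attached to $\p$ with the right bound is the functional $\mu_\p$, and one then takes $|\mu_\p|^{1/2}$.)

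The paper closes this gap by an optimality argument that you do not mention: one introduces a ``$P$-cb'' norm $\|\cdot\|_P\le\|\cdot\|_{\cb}$ via Stinespring-type factorizations $\tau(y^*\p(a)xb^*)=\ip{\pi(a\otimes\bar b)V(x),W(y)}$ indexed by $x,y\in\cN(P)$, sets $\Lambda_P(M)=\inf\sup_n\|\p_n\|_P$ over all approximate identities, and picks a self-adjoint approximate identity $(\p_n)$ attaining this infimum. For $u\in\cN(P)$ the net $\psi_n=(\p_n+\p_n^u)/2$ is again an approximate identity, so $\liminf\|\psi_n\|_P\ge\Lambda_P(M)$; comparing this with the explicit factorization of $\psi_n$ through $\frac{1}{\sqrt2}(\frac{V_n+V_n^u}{2},\frac{W_n+W_n^u}{2})$ and applying the parallelogram identity forces $\|V_n(z_n)-V_n^u(z_n)\|\to0$ for suitable $z_n\in\cN(P)$, which is exactly the norm-level invariance needed to conclude $\|\mu_{\p_n}-\mu_{\p_n}\circ\Ad_{u\otimes\bar u}\|\to0$ and $\|\mu_{\p_n}-\mu_{\p_n}^{v\otimes\bar v}\|\to0$. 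Amenability of $P$ enters not as a separate averaging step for the ``inner'' part, but to make $\varsigma|_{M\otimes\bar P}$ min-continuous (so that $\mu_\p$ is bounded) and to ensure the factorization data exist; both inner and outer invariance come from the same optimality/parallelogram mechanism.
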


It follows that a type $\mathrm{II}_1$ factor having the weak$^*$ completely bounded
approximation property and property $\mathrm{(T)}$ (e.g., the group
von Neumann algebra of a torsion-free lattice in $\Sp(1,n)$)
is not isomorphic to a group-measure-space von Neumann algebra.
\section{Preliminary on Herz--Schur multipliers}\label{sec:schur}
Let $G$ be a group. We denote by $\lambda$ the left regular representation
of $G$ on $L^2(G)$, by $C^*_\lambda G$ the reduced group C$^*$-algebra
and by $\cL G$ the group von Neumann algebra of $G$.
The \emph{Fourier algebra} $\cA G$ of $G$ consists of all
functions $\p$ on $G$ such that there are vectors $\xi,\eta \in L^2(G)$
satisfying $\p(x)= \ip{\lambda(x)\xi,\eta} $ for every $x\in G$.
(In other words, $\cA G=L^2(G)*L^2(G)$.)
It is a Banach algebra with the norm $\|\p\|=\inf\{\|\xi\|\|\eta\|\}$, where
the infimum is taken over all $\xi,\eta\in L^2(G)$ as above.
The Fourier algebra $\cA G$ is naturally identified with the predual of
$\cL G$ under the duality pairing
$\ip{\p,\lambda(f)}=\int_G \p f$ for $\p\in \cA G$ and $\lambda(f)\in\cL G$.
If $H$ is a closed subgroup of $G$, then $\p|_H\in \cA H$ for every $\p\in \cA G$.
A continuous function $\p$ on $G$ is called
a \emph{Herz--Schur multiplier} if there are a Hilbert space $\cH$
and bounded continuous functions $\xi,\eta\colon G\to\cH$
such that
$\p(y^{-1}x)=\ip{\xi(x),\eta(y)}$ for every $x,y\in G$.
The Herz--Schur norm of $\p$ is defined by
\[
\|\p\|_{\cb} = \inf\{ \|\xi\|_\infty\|\eta\|_\infty \},
\]
where the infimum is taken over all $\xi,\eta\in C(G,\cH)$ as above.
The Banach space of Herz--Schur multipliers is denoted by $B_2(G)$.
Clearly, one has a contractive embedding of $\cA G$ into $B_2(G)$.
The Herz--Schur norm $\|\p\|_{\cb}$ coincides with the cb-norm
of the corresponding multipliers on $\cL G$ or on $C^*_\lambda G$:
\[
\|\p\|_{\cb}
=\| m_\p\colon \cL G\ni \lambda(f)\mapsto \lambda(\p f)\in \cL G\|_{\cb}
=\| m_\p|_{C^*_\lambda G}\|_{\cb}.
\]
Indeed, $\|\p\|_{\cb}\geq \| m_\p\|_{\cb}$ is easy to see:
Given a factorization $\p(x^{-1}y)=\ip{\xi(x),\eta(y)}$ with $\xi,\eta\in C(G,\cH)$,
we define
$V_\xi\colon L^2(G)\to L^2(G,\cH)$ by
$(V_\xi f)(x)=f(x)\xi(x^{-1})$, and likewise for $V_\eta$.
Then,
$\lambda(\p f)=V_\eta^*(\lambda(f)\otimes1_{\cH})V_\xi$
and $\| m_\p\|_{\cb}\le \|\xi\|_\infty\|\eta\|_\infty$.
We will give a proof of the converse inequality in Lemma~\ref{lem:lmod}, but
sketch it here in the case of amenable groups.
Let $N$ be an amenable group and $\p\in B_2(N)$.
Since the unit character $\tau_0$ is continuous on $C^*_\lambda N$,
the linear functional $\omega_\p=\tau_0\circ m_\p$ is bounded on $C^*_\lambda N$
and satisfies $\|\omega_\p\|\le\|m_\p\|_{\cb}$.
Let $(\pi,\cH)$ be the GNS representation for $|\omega_\p|$ and view $\pi$ as
a continuous unitary $N$-representation.
Then, there are vectors $\xi,\eta\in\cH$
such that $\|\xi\|\|\eta\|=\|\omega_\p\|$ and $\p(x)=\ip{\pi(x)\xi,\eta}$ for every $x\in N$.
(Hence, $\|\omega_\p\|=\|\p\|_{\cb}$.)

\begin{defn}
Let $G$ be a group.
By an \emph{approximate identity} on $G$, we mean a net $(\p_n)$ in $\cA G$
which converges to $1$ uniformly on compacta.
It is \emph{completely bounded} if
\[
\|(\p_n)\|_{\cb} := \sup_n \|\p_n\|_{\cb}<+\infty.
\]
A group $G$ is said to be \emph{weakly amenable} if there is
a completely bounded approximate identity on $G$.
The Cowling--Haagerup constant $\Lambda_{\cb}(G)$ is defined to be
\[
\Lambda_{\cb}(G)=\inf\{ \|(\p_n)\|_{\cb} : \mbox{$(\p_n)$ a c.b.a.i.\ on $G$}\}.
\]
Note that the above infimum is attained. See \cite{ch,bo} for more information.
\end{defn}

It is easy to see that if $H\le G$ is a closed subgroup, then
$\Lambda_{\cb}(H) \le \Lambda_{\cb}(G)$. On this occasion,
we record that the same inequality holds also for a ``random''
or ``ME'' subgroup in the sense of \cite{monod,sako} (cf.\ \cite{cz}).
For this, we only consider countable discrete groups $\Lambda$ and $\G$.
Recall that $\Lambda$ is an ME subgroup of $\G$ if
there is a standard measure space $\Omega$ on which $\Lambda\times\G$ acts
by measure-preserving transformations in such a way that
each of $\Lambda$- and $\G$-actions admits a fundamental domain and
the measure of $\Omega_\G:=\Omega/\G$ is finite.
The action $\Lambda\acts\Omega$ gives rise to a measure-preserving action
$\Lambda\acts\Omega_\G$ and a measurable cocycle
$\alpha\colon\Lambda\times\Omega_\G\to\G$ such that
the action $\Lambda\acts\Omega$ is isomorphic (up to null sets)
to the twisted action $\Lambda\acts\Omega_\G\times\G$, given by
$a(t,g)=(at,\alpha(a,t)g)$ for $a\in\Lambda$, $t\in\Omega_\G$ and $g\in\G$.
The map $\alpha$ satisfies the cocycle identity:
$\alpha(ab,t)=\alpha(a,bt)\alpha(b,t)$ for every $a,b\in\Lambda$
and a.e.\ $t\in\Omega_\G$.
For $\p\in B_2(\G)$, we denote the ``induced'' function on $\Lambda$ by $\p_\alpha$:
\[
\p_\alpha(a)=\int_{\Omega_\G}\p(\alpha(a,t))\,dt.
\]
Here, we normalized the measure so that $|\Omega_\G|=1$.
Since
\[
\p_\alpha(b^{-1}a)=\int_{\Omega_\G}\p(\alpha(b,b^{-1}at)^{-1}\alpha(a,t))\,dt
=\int_{\Omega_\G}\p(\alpha(b,b^{-1}t)^{-1}\alpha(a,a^{-1}t))\,dt,
\]
one has $\p_\alpha\in B_2(\Lambda)$ and $\|\p_\alpha\|_{\cb}\le\|\p\|_{\cb}$.
Suppose now that $\p\in\cA\G$. Then, $\p_\alpha$ is a coefficient of
the unitary $\Lambda$-representation $\sigma$ on $L^2(\Omega)$
induced by the measure-preserving action $\Lambda\acts\Omega$, i.e.,
there are $\xi,\eta\in L^2(\Omega)$ such that
$\p_\alpha(a)=\ip{\sigma(a)\xi,\eta}$.
Since $\Omega$ admits a $\Lambda$-fundamental domain, $\sigma$ is a multiple of the regular representation
and $\p_\alpha\in\cA\Lambda$.
By inducing an approximate identity on $\G$, one sees
that if $\G$ is weakly amenable, then so is $\Lambda$ and
$\Lambda_{\cb}(\Lambda)\le\Lambda_{\cb}(\G)$.
\section{Proof of Theorem~\ref{thmA}}
\begin{lem}\label{lem:lmod}
Let $N$ be an amenable closed normal subgroup of $G$ and $\p\in B_2(G)$.
Then, there are a Hilbert space $\cH$, functions $\xi,\eta\in C(G,\cH)$
and a continuous unitary representation $\pi$ of $N$ on $\cH$ such that
\begin{itemize}
\item
$\|\xi\|_\infty=\|\eta\|_\infty=\|\p\|_{\cb}^{1/2}$;
\item
$\p(y^{-1}x)=\ip{\xi(x),\eta(y)}$ for every $x,y\in G$;
\item
$\pi(a)\xi(x)=\xi(ax)$ and $\pi(a)\eta(y)=\eta(ay)$ for every $a\in N$ and $x,y\in G$.
\end{itemize}
\end{lem}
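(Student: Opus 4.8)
The plan is to start from an arbitrary Herz--Schur factorization of $\p$ over all of $G$ and then \emph{average it over the left translation action of $N$}, using amenability; this averaging is precisely what manufactures the $N$-equivariance demanded in the third bullet. Concretely, fix a Hilbert space $\mathcal{H}_0$ and bounded continuous functions $\tilde\xi,\tilde\eta\colon G\to\mathcal{H}_0$ with $\|\tilde\xi\|_\infty=\|\tilde\eta\|_\infty=\|\p\|_{\cb}^{1/2}$ and $\p(y^{-1}x)=\ip{\tilde\xi(x),\tilde\eta(y)}$ for all $x,y\in G$; such a norm-realizing factorization exists since the Herz--Schur norm is by definition an attained infimum (after the harmless rescaling $\tilde\xi\mapsto t\tilde\xi$, $\tilde\eta\mapsto t^{-1}\tilde\eta$ one may assume the two sup-norms equal). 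Only the amenability of $N$, through an invariant mean, and the left action $N\acts G$ will be used.

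Next I would build the ambient Hilbert space. Let $m$ be a left-invariant mean on $L^\infty(N)$, and on the space of bounded continuous functions $f\colon N\to\mathcal{H}_0$ put the positive semidefinite sesquilinear form $\ip{f,g}_m=m\bigl(a\mapsto\ip{f(a),g(a)}_{\mathcal{H}_0}\bigr)$; the function $a\mapsto\ip{f(a),g(a)}$ lies in $C_b(N)\subseteq L^\infty(N)$, so the mean applies, and positivity of $m$ makes the form positive. Let $\mathcal{K}$ be the Hilbert space obtained after separation and completion. Left translation $(\rho(b)f)(a)=f(b^{-1}a)$ preserves the form by left-invariance of $m$, so it descends to a unitary representation $\pi$ of $N$ on $\mathcal{K}$. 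Now set $\Xi(x)=[\,a\mapsto\tilde\xi(a^{-1}x)\,]$ and $H(y)=[\,a\mapsto\tilde\eta(a^{-1}y)\,]$. Direct computation gives the algebraic content of all three bullets: $\pi(b)\Xi(x)=\Xi(bx)$ and $\pi(b)H(y)=H(by)$ because $\tilde\xi((b^{-1}a)^{-1}x)=\tilde\xi(a^{-1}bx)$; the factorization
\[
\ip{\Xi(x),H(y)}_m=m\bigl(a\mapsto\ip{\tilde\xi(a^{-1}x),\tilde\eta(a^{-1}y)}\bigr)=m\bigl(a\mapsto\p(y^{-1}x)\bigr)=\p(y^{-1}x)
\]
holds because the integrand is the \emph{constant} $\p((a^{-1}y)^{-1}(a^{-1}x))=\p(y^{-1}x)$; and $\|\Xi\|_\infty\le\|\tilde\xi\|_\infty$, $\|H\|_\infty\le\|\tilde\eta\|_\infty$. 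Since $(\Xi,H)$ is itself a factorization of $\p$ one has $\|\p\|_{\cb}\le\|\Xi\|_\infty\|H\|_\infty\le\|\tilde\xi\|_\infty\|\tilde\eta\|_\infty=\|\p\|_{\cb}$, and combining this with the two displayed upper bounds forces $\|\Xi\|_\infty=\|H\|_\infty=\|\p\|_{\cb}^{1/2}$.

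The main obstacle is continuity: the statement demands $\xi,\eta\in C(G,\cH)$ and a \emph{continuous} unitary representation $\pi$, whereas a mean ignores pointwise (and uniformly bounded) convergence, so neither $x\mapsto\Xi(x)$ nor $b\mapsto\pi(b)v$ is automatically continuous. I expect this to be the real content of the lemma. The remedy is to choose the initial factorization $\tilde\xi,\tilde\eta$ uniformly continuous for right translations, i.e.\ with $\sup_{z}\|\tilde\xi(zc)-\tilde\xi(z)\|\to0$ as $c\to e$ (that a Herz--Schur multiplier admits such a factorization is the delicate technical point I would need to secure). Granting it, for fixed $x$ one writes $a^{-1}bx=(a^{-1}x)(x^{-1}bx)$, whence
\[
\|\Xi(bx)-\Xi(x)\|_m^2=m\bigl(a\mapsto\|\tilde\xi(a^{-1}bx)-\tilde\xi(a^{-1}x)\|^2\bigr)\le\sup_{z\in G}\|\tilde\xi(z\,x^{-1}bx)-\tilde\xi(z)\|^2\longrightarrow 0
\]
as $b\to e$, since $x^{-1}bx\to e$; the same applies to $H$. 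Thus $\Xi,H$ are continuous and $b\mapsto\pi(b)\Xi(x)=\Xi(bx)$, $b\mapsto\pi(b)H(y)=H(by)$ are continuous. Finally I would take $\cH\subseteq\mathcal{K}$ to be the closed linear span of $\{\pi(b)\Xi(x),\pi(b)H(y):b\in N,\ x,y\in G\}$, which is $\pi$-invariant; as $\pi$ acts by unitaries and is strongly continuous on this total family of vectors, a standard $\e/3$ argument upgrades it to strong continuity on all of $\cH$. Restricting $\xi=\Xi$, $\eta=H$, and $\pi$ to $\cH$ then delivers the three bullets.
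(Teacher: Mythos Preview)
Your averaging construction is a genuinely different route from the paper's, and the algebraic verifications (equivariance, factorization, norm bounds) are correct. The paper instead applies a Stinespring-type factorization (following Jolissaint) to the completely bounded map $q\circ m_{\p}\colon C^*_\lambda G\to C^*_\lambda(G/N)$, where $q$ is induced by the quotient $G\to G/N$; amenability of $N$ is what lets $q$ descend to the reduced C$^*$-algebras. This yields a continuous unitary $G$-representation $\pi$ and operators $V,W$ with $(q\circ m_{\p})(X)=W^*\pi(X)V$, and then $\xi(x)=\pi(x)V\lambda_{G/N}(q(x^{-1}))\zeta$, $\eta(y)=\pi(y)W\lambda_{G/N}(q(y^{-1}))\zeta$ give all three bullets at once, the equivariance $\pi(a)\xi(x)=\xi(ax)$ coming for free from $\lambda_{G/N}(a)=1$ for $a\in N$. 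No averaging is needed, and continuity of $\xi,\eta,\pi$ is automatic from strong continuity of the representations involved.

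The gap in your argument is exactly where you locate it, and it is genuine. You need a norm-realizing factorization $(\tilde\xi,\tilde\eta)$ that is moreover right-uniformly continuous. Neither the attainment of the infimum defining $\|\p\|_{\cb}$ nor right uniform continuity of an attaining pair is ``by definition''; the standard way to produce a factorization with this much regularity is precisely a Jolissaint/Stinespring argument of the kind the paper uses, at which point the $N$-equivariance is already in hand and the averaging step becomes superfluous. Without that input the invariant mean $m$ has no reason to respect the limit $x'\to x$, so neither $\Xi$ nor $\pi$ need be continuous. For discrete $G$ your proof is complete and gives a pleasant alternative; in the general locally compact case the paper's representation-theoretic route is what actually secures continuity.
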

\begin{proof}
We follow Jolissaint's simple proof (\cite{jolissaint}) of the inequality $\|\p\|_{\cb}\le\| m_\p\|_{\cb}$.
Since $N$ is amenable, the quotient map $q\colon G\to G/N$ extends to
a $*$-homomorphism $q\colon C^*_\lambda G \to C^*_\lambda(G/N)$
between the reduced group C$^*$-algebras.
Since $q\circ m_\p$ is completely bounded on $C^*_\lambda G$,
a Stinespring type factorization theorem (Theorem B.7 in \cite{bo}) yields
a $*$-representation $\pi\colon C^*_\lambda G\to\IB(\cH)$
and operators $V,W\in\IB(L^2(G/N),\cH)$ such that
$\|V\|=\|W\|\le\| q\circ m_\p \|_{\cb}^{1/2}$ and
$(q\circ m_\p )( X ) = W^*\pi(X)V$ for $X\in C^*_\lambda G$.
We view $\pi$ as a continuous unitary representation of $G$.
Then, for a fixed unit vector $\zeta\in L^2(G/N)$,
the maps $\xi(x)=\pi(x)V\lambda_{G/N}(q(x^{-1}))\zeta$ and
$\eta(y)=\pi(y)W\lambda_{G/N}(q(y^{-1}))\zeta$ are continuous,
$\|\xi\|_\infty,\|\eta\|_\infty\le\| m_\p\|_{\cb}^{1/2}$ and
$\p(y^{-1}x)=\ip{\xi(x),\eta(y)}$ for every $x,y\in G$.
Moreover, $\pi(a)\xi(x)=\xi(ax)$ for $a\in N$, because $\lambda_{G/N}(a)=1$.
\end{proof}

We denote by $\p^g$ the right translation of a function $\p$ by $g\in G$, i.e.,
$\p^g(x)=\p(xg^{-1})$.
\begin{lem}\label{lem:aml}
Let $N$ be an amenable group, $\p\in B_2(N)$ and $a\in N$.
Then,
\[
\|\frac{1}{2}\bigl(\p+\p^a\bigr)\|_{\cb}^2
+\|\frac{1}{2}\bigl(\p-\p^a\bigr)\|_{\cb}^2
\le \|\p\|_{\cb}^2.
\]
\end{lem}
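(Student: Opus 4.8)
The plan is to reduce everything to a single parallelogram identity in a Hilbert space, using the amenability of $N$ to realize the Herz--Schur norm \emph{exactly} as a unitary coefficient. Concretely, the sketch at the end of Section~\ref{sec:schur} provides a continuous unitary representation $\pi$ of $N$ on a Hilbert space $\cH$ together with vectors $\xi,\eta\in\cH$ such that $\p(x)=\ip{\pi(x)\xi,\eta}$ for every $x\in N$ and, crucially, with the equality $\|\xi\|\|\eta\|=\|\p\|_{\cb}$ rather than a mere inequality. (By rescaling $\xi\mapsto t\xi$ and $\eta\mapsto t^{-1}\eta$ one may even arrange $\|\xi\|=\|\eta\|=\|\p\|_{\cb}^{1/2}$, but this is not needed.)

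First I would rewrite the right translate as a shift of the vector $\xi$. Since $\p^a(x)=\p(xa^{-1})=\ip{\pi(x)\pi(a^{-1})\xi,\eta}$, setting $\xi_a:=\pi(a^{-1})\xi$ gives
\[
\tfrac12\bigl(\p\pm\p^a\bigr)(x)=\ip{\pi(x)\,\tfrac12(\xi\pm\xi_a),\,\eta}
\qquad\text{for all }x\in N.
\]
Each of these two functions is thus a coefficient of the unitary representation $\pi$, so the general (easy) coefficient estimate $\|\psi\|_{\cb}\le\|\xi'\|\,\|\eta\|$ for $\psi(x)=\ip{\pi(x)\xi',\eta}$ yields
\[
\|\tfrac12(\p+\p^a)\|_{\cb}^2+\|\tfrac12(\p-\p^a)\|_{\cb}^2
\le\Bigl(\|\tfrac12(\xi+\xi_a)\|^2+\|\tfrac12(\xi-\xi_a)\|^2\Bigr)\|\eta\|^2.
\]

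To conclude I would apply the parallelogram law to the bracketed term,
\[
\|\tfrac12(\xi+\xi_a)\|^2+\|\tfrac12(\xi-\xi_a)\|^2
=\tfrac12\bigl(\|\xi\|^2+\|\xi_a\|^2\bigr)=\|\xi\|^2,
\]
where the last equality uses that $\pi$ is unitary, so $\|\xi_a\|=\|\pi(a^{-1})\xi\|=\|\xi\|$. Combining the two displays bounds the left-hand side by $\|\xi\|^2\|\eta\|^2=\|\p\|_{\cb}^2$, which is exactly the claim.

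The single place where amenability does genuine work is the exact attainment $\|\xi\|\|\eta\|=\|\p\|_{\cb}$: the general bound only furnishes $\|\p\|_{\cb}\le\|\xi\|\|\eta\|$, and without the reverse equality the final estimate collapses. So I expect the essential (and already-supplied) ingredient to be the representation-theoretic realization of the Herz--Schur norm for amenable groups; once that is granted, the remainder is the routine unitary parallelogram computation above.
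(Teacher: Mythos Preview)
Your argument is correct and follows exactly the paper's proof: realize $\p$ as a coefficient $\ip{\pi(\cdot)\xi,\eta}$ with $\|\xi\|\|\eta\|=\|\p\|_{\cb}$ via amenability, observe that right translation by $a$ replaces $\xi$ by $\pi(a^{-1})\xi$, and apply the parallelogram identity. The only cosmetic difference is that the paper carries the factors through as $\|\p+\p^a\|_{\cb}^2+\|\p-\p^a\|_{\cb}^2\le4\|\p\|_{\cb}^2$ rather than writing the $\tfrac12$'s throughout.
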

\begin{proof}
There are a continuous unitary representation $\pi$ of $N$ on a Hilbert space $\cH$ and
vectors $\xi,\eta\in\cH$ such that $\|\xi\|=\|\eta\|=\|\p\|_{\cb}^{1/2}$ and
$\p(x)=\ip{\pi(x)\xi,\eta}$ for every $x\in N$.
Since $\bigl(\p\pm\p^a\bigr)(x)=\ip{\pi(x)(\xi\pm\pi(a^{-1})\xi),\eta}$, one has
\[
\| \p + \p^a \|_{\cb}^2 + \| \p - \p^a \|_{\cb}^2
\le \| \xi + \pi(a^{-1})\xi \|^2\|\eta\|^2 + \| \xi - \pi(a^{-1})\xi \|^2\|\eta\|^2
= 4\|\p\|_{\cb}^2.
\]
\end{proof}

For $\p\in B_2(G)$, we define $\p^*(x):=\overline{\p(x^{-1})}$, and say $\p$ is
\emph{self-adjoint} if $\p^*=\p$. For any $\p\in B_2(G)$, the function
$(\p+\p^*)/2$ is self-adjoint and $\|(\p+\p^*)/2\|_{\cb}\le\|\p\|_{\cb}$.
Thus every approximate identity can be made self-adjoint without increasing norm.
We fix a closed subgroup $N$ of $G$.
A completely bounded approximate identity $(\p_n)$ on $G$
is said to be \emph{$N$-optimal}
if all $\p_n$ are self-adjoint, $\|(\p_n)\|_{\cb}=\Lambda_{\cb}(G)$ and
\[
\|(\p_n|_N)\|_{\cb}
= \inf\{ \|(\psi_n|_N)\|_{\cb} :
(\psi_n) \mbox{ a c.b.a.i.\ such that }\|(\psi_n)\|_{\cb}=\Lambda_{\cb}(G)\}.
\]
Note that an $N$-optimal approximate identity exists (if $G$ is weakly amenable).

\begin{prop}\label{prop:aap}
Let $G$ be an weakly amenable group and $N$ be an amenable closed normal subgroup of $G$.
Let $(\p_n)$ be an $N$-optimal approximate identity on $G$.
Then, for every $g\in G$ and $a\in N$,
\[
\lim_n\| ( \p_n - \p_n\circ\Ad_g )|_N \|_{\cb}=0
\mbox{ and }
\lim_n \| ( \p_n - \p_n^a )|_N \|_{\cb}=0.
\]
\end{prop}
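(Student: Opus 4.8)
The plan is to feed suitably averaged versions of the optimal net back into the definition of $N$-optimality. Throughout write $\theta_g=\Ad_g|_N$ for the automorphism $x\mapsto gxg^{-1}$ of $N$ (defined since $N$ is normal), and put $L=\|(\p_n|_N)\|_{\cb}=\sup_n\|\p_n|_N\|_{\cb}$. Recall that a net $(\psi_n)$ is an admissible competitor in the definition of $N$-optimality once it lies in $\cA G$, converges to $1$ uniformly on compacta, and satisfies $\|(\psi_n)\|_{\cb}=\Lambda_{\cb}(G)$. The candidates $\tfrac12(\p_n+\p_n^a)$ for $a\in N$ and $\tfrac12(\p_n+\p_n\circ\Ad_g)$ for $g\in G$ all qualify: right translation and the automorphism $\Ad_g$ preserve $\cA G$, carry approximate identities to approximate identities (a translate or conjugate of a compact set is compact), and \emph{preserve} the Herz--Schur norm, so each average has $\cb$-norm $\le\Lambda_{\cb}(G)$, hence exactly $\Lambda_{\cb}(G)$.

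The first step upgrades the crude supremum defining $L$ to honest convergence. Every tail and every subnet of a competitor is again a competitor; since a tail occurs among the nets over which the infimum in the definition of $N$-optimality is taken, its restriction supremum is $\ge L$, and therefore $\limsup_n\|\psi_n|_N\|_{\cb}\ge L$ along any competitor $(\psi_n)$. Choosing a subnet of $(\psi_n)$ that converges to $\liminf_n\|\psi_n|_N\|_{\cb}$ and applying this to that subnet yields $\liminf_n\|\psi_n|_N\|_{\cb}\ge L$. Applied to $(\p_n)$ itself, together with the automatic bound $\limsup_n\|\p_n|_N\|_{\cb}\le L$, this gives $\lim_n\|\p_n|_N\|_{\cb}=L$. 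More generally, any competitor $(\psi_n)$ with $\|\psi_n|_N\|_{\cb}\le\|\p_n|_N\|_{\cb}$ for all $n$ satisfies $\lim_n\|\psi_n|_N\|_{\cb}=L$.

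For the translation statement I would invoke Lemma~\ref{lem:aml}. As $a\in N$ one has $(\p_n|_N)^a=(\p_n^a)|_N$, so the lemma applied to $\p_n|_N\in B_2(N)$ gives
\[
\bigl\|\tfrac12(\p_n+\p_n^a)|_N\bigr\|_{\cb}^2+\bigl\|\tfrac12(\p_n-\p_n^a)|_N\bigr\|_{\cb}^2\le\|\p_n|_N\|_{\cb}^2.
\]
The averaged competitor $\tfrac12(\p_n+\p_n^a)$ has restriction norm at most $\|\p_n|_N\|_{\cb}$, so by the previous step its restriction norm tends to $L$; since $\|\p_n|_N\|_{\cb}^2\to L^2$ as well, the displayed inequality squeezes $\|\tfrac12(\p_n-\p_n^a)|_N\|_{\cb}\to0$, which is the second limit.

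The conjugation statement is the main obstacle. The reason is that there is no parallelogram identity for $\Ad_g$ with the correct constant: realizing $\p_n|_N$ and $(\p_n\circ\Ad_g)|_N=(\p_n|_N)\circ\theta_g$ as coefficients of two different representations, or of their direct sum, and running the above argument only produces $2\|\p_n|_N\|_{\cb}^2$ on the right, which is too weak; and, unlike the case $a\in N$, a general $\Ad_g$ does not reduce to translations inside $N$. To recover the right constant I would instead use the covariant factorization of Lemma~\ref{lem:lmod} for $\p_n\in B_2(G)$: taking $y=e$ and $y=g^{-1}$ in $\p_n(y^{-1}x)=\ip{\xi_n(x),\eta_n(y)}$ and using $\xi_n(bx)=\pi_n(b)\xi_n(x)$ for $b\in N$, one exhibits both restrictions as coefficients of the \emph{single} $N$-representation $\pi_n$,
\[
\p_n|_N(b)=\ip{\pi_n(b)\xi_n(e),\eta_n(e)},\qquad (\p_n\circ\Ad_g)|_N(b)=\ip{\pi_n(b)\xi_n(g^{-1}),\eta_n(g^{-1})},
\]
whence
\[
\bigl\|(\p_n-\p_n\circ\Ad_g)|_N\bigr\|_{\cb}\le\|\p_n\|_{\cb}^{1/2}\bigl(\|\xi_n(e)-\xi_n(g^{-1})\|+\|\eta_n(e)-\eta_n(g^{-1})\|\bigr).
\]
This reduces everything to showing that the boundary vectors of the covariant field become asymptotically $g$-independent. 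The delicate point, and where optimality must re-enter — presumably through a uniform convexity estimate in the Hilbert spaces $\cH_n$ applied to the averaged competitor $\tfrac12(\p_n+\p_n\circ\Ad_g)$ — is to reconcile the ``loose'' norms $\|\xi_n\|_\infty=\|\eta_n\|_\infty=\|\p_n\|_{\cb}^{1/2}$ furnished by Lemma~\ref{lem:lmod} with the ``tight'' value $L$ governing the restriction. This is the step I expect to demand the most care.
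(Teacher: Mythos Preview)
Your treatment of the translation limit $\|(\p_n-\p_n^a)|_N\|_{\cb}\to0$ is correct and is exactly the paper's argument: feed the competitor $\tfrac12(\p_n+\p_n^a)$ into $N$-optimality and apply Lemma~\ref{lem:aml}.

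For the conjugation limit, however, your plan has a genuine gap, and the difficulty you flag at the end is real---but the way out is not the one you anticipate. Two points:

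\emph{First}, you are working against the wrong constant. The paper does not use $N$-optimality for the $\Ad_g$ statement at all; it uses only the global condition $\|(\p_n)\|_{\cb}=\Lambda_{\cb}(G)$. There is no need to ``reconcile'' the loose norms $\|\xi_n\|_\infty=\|\p_n\|_{\cb}^{1/2}$ with the tight value $L$; one works entirely at the level $\Lambda_{\cb}(G)$.

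\emph{Second}, the correct averaged competitor is $\psi_n=\tfrac12(\p_n+\p_n^g)$, the \emph{right translate} by $g\in G$, not $\tfrac12(\p_n+\p_n\circ\Ad_g)$. The reason is that $\p_n^g(y^{-1}x)=\ip{\xi_n(xg^{-1}),\eta_n(y)}$ shifts only the $\xi$-leg, and (using self-adjointness) also equals $\ip{\eta_n(xg^{-1}),\xi_n(y)}$. Averaging these two expressions for $\psi_n$ produces a factorization in $\cH\oplus\cH$ in which one leg carries the averaged fields $\tfrac12(\xi_n+\xi_n^g),\tfrac12(\eta_n+\eta_n^g)$ and the other carries the raw fields $\eta_n,\xi_n$. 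Since $\liminf_n\|\psi_n\|_{\cb}\ge\Lambda_{\cb}(G)$ and the raw leg has sup-norm $\le\Lambda_{\cb}(G)^{1/2}$, the averaged leg must have sup-norm tending to $\Lambda_{\cb}(G)^{1/2}$. This yields near-maximizers $z_n\in G$ at which, by the parallelogram identity, $\|\xi_n(z_n)-\xi_n(z_ng^{-1})\|\to0$ and likewise for $\eta_n$. One then twists to $\pi_n'=\pi_n\circ\Ad_{z_n}$ and realizes both $\p_n|_N$ and $(\p_n\circ\Ad_g)|_N$ as coefficients of $\pi_n'$ at the vectors $\xi_n(z_n),\eta_n(z_n)$ and $\xi_n(z_ng^{-1}),\eta_n(z_ng^{-1})$ respectively, giving the conclusion.

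Your attempt to evaluate at the fixed points $e$ and $g^{-1}$ cannot be salvaged: nothing forces $\|\xi_n(e)\|$ to be near-maximal, so no parallelogram estimate is available there. And averaging with $\p_n\circ\Ad_g$ shifts both legs simultaneously, which destroys the asymmetric factorization needed to isolate the averaged fields.
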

\begin{proof}
We apply Lemma~\ref{lem:lmod} for each $\p_n$ and
find $(\pi_n,\cH_n,\xi_n,\eta_n)$ satisfying
the conditions stated there. In particular,
$\|\xi\|_\infty=\|\eta\|_\infty\le\Lambda_{\cb}(G)^{1/2}$
and
$\p_n(y^{-1}x)=\ip{\xi_n(x),\eta_n(y)}$ for every $x,y\in G$.
Let $g\in G$ be given and consider $\psi_n=(\p_n+\p_n^g)/2$.
Since $(\psi_n)$ is a completely bounded approximate identity, one must have
$\liminf_n\|\psi_n\|_{\cb}\geq\Lambda_{\cb}(G)$.
Meanwhile, since $\p_n$ is self-adjoint,
\[
\psi_n(y^{-1}x)=\frac{1}{4}\bigl(\ip{\xi_n(x)+\xi_n(xg^{-1}),\eta_n(y)}
 + \ip{\eta_n(x)+\eta_n(xg^{-1}),\xi_n(y)}\bigr)
\]
and hence
\[
\|\psi_n\|_{\cb}\le \left\|\frac{1}{\sqrt{2}}\bigl(
 \frac{\xi_n+\xi_n^g}{2},\,\frac{\eta_n+\eta_n^g}{2}\bigr)
 \right\|_{L^\infty(G,\cH\oplus\cH)}
\left\|\frac{1}{\sqrt{2}}\bigl(\eta_n,\,\xi_n\bigr)
 \right\|_{L^\infty(G,\cH\oplus\cH)}
\le\Lambda_{\cb}(G).
\]
It follows that
\[
\lim_n\left\|\frac{1}{\sqrt{2}}\bigl(
\frac{\xi_n+\xi_n^g}{2},\,\frac{\eta_n+\eta_n^g}{2}\bigr)
\right\|_{L^\infty(G,\cH\oplus\cH)}
=\Lambda_{\cb}(G)^{1/2},
\]
which means that there is a net $z_n\in G$ such that
\[
\lim_n\|\frac{\xi_n(z_n)+\xi_n(z_ng^{-1})}{2}\|=\Lambda_{\cb}(G)^{1/2}
\mbox{ and }
\lim_n\|\frac{\eta_n(z_n)+\eta_n(z_ng^{-1})}{2}\|=\Lambda_{\cb}(G)^{1/2}.
\]
By the parallelogram identity, this implies that
\[
\lim_n\|\xi_n(z_n)-\xi_n(z_ng^{-1})\|=0
\mbox{ and }
\lim_n\|\eta_n(z_n)-\eta_n(z_ng^{-1})\|=0.
\]
The unitary $N$-representation $\pi_n'=\pi_n\circ\Ad_{z_n}$ satisfies
$\pi_n'(a)\xi_n(x)=\xi_n(z_naz_n^{-1}x)$,
\[
\p_n(a) =\ip{\pi_n'(a)\xi_n(z_n),\eta_n(z_n)}
\mbox{ and }
(\p_n\circ\Ad_g)(a) =\ip{\pi_n'(a)\xi_n(z_ng^{-1}),\eta_n(z_ng^{-1})}
\]
for $a\in N$.
It follows that $\|(\p_n - \p_n\circ\Ad_g)|_N\|_{\cb}\to0$.
That $\| ( \p_n - \p_n^a )|_N \|_{\cb}\to0$ follows from
$N$-optimality of $(\p_n)$ and Lemma~\ref{lem:aml}.
\end{proof}

\begin{proof}[Proof of Theorem~\ref{thmA}]
Let $(\p_n)$ be an $N$-optimal approximate identity on $G$ and consider
linear functionals $\omega_n=\tau_0\circ m_{\p_n}$ on $C^*_\lambda N$,
where $\tau_0$ is the unit character on $N$ (see Section~\ref{sec:schur}).
Since $\p_n \in \cA G$, the linear functionals $\omega_n$ extend to
ultraweakly-continuous linear functionals on the group von Neumann algebra $\cL N$.
Indeed, they are nothing but $\p_n|_N\in \cA N=(\cL N)_*$.
One has $\|\omega_n\|\le\Lambda_{\cb}(G)$,
$\omega_n(1_{\cL N})=\p_n(1_N)$ and, by Proposition~\ref{prop:aap},
$\|\omega_n-\omega_n\circ\Ad_g\|\to0$ and $\|\omega_n-\omega_n^a\|\to0$
for every $g\in G$ and $a\in N$.
We consider $\zeta_n:=|\omega_n|^{1/2}\in L^2(N)$ and
$\zeta_n' :=\omega_n|\omega_n|^{-1/2}\in L^2(N)$ so that
$\omega_n(X)=\ip{X\zeta_n,\zeta_n'}$ for $X\in\cL N$.
Here the absolute value and the square root are taken in the sense of
the standard representation $\cL N\subset\IB(L^2(N))$.
(In case where $N$ is abelian,
the Fourier transform $L^2(N)\cong L^2(\widehat{N})$ implements
$\cL N\cong L^\infty(\widehat{N})$ and $(\cL N)_*\cong L^1(\widehat{N})$, and
the absolute value and square root are computed as
ordinary functions on the Pontrjagin dual $\widehat{N}$.)
We note that $\p_n(1)\le\|\zeta_n\|_2^2\le\Lambda_{\cb}(G)$.
By continuity of the absolute value (Proposition III.4.10 in \cite{takesaki})
and the Powers--St{\o}rmer inequality,
one has $\|\zeta_n - \Ad_g \zeta_n \|_2 \to 0$ for every $g\in G$.
Moreover, since
\[
\|\zeta_n\|_2\|\zeta_n'\|_2-\|\frac{\zeta_n + \lambda(a^{-1})\zeta_n}{2}\|_2\|\zeta_n'\|_2
\le\|\omega_n\|-\|\frac{\omega_n+\omega_n^a}{2}\| \to 0,
\]
one has $\|\zeta_n - \lambda(a^{-1})\zeta_n\|_2\to0$ for every $a\in N$.
Thus, any limit point of $(\zeta_n^2)$ in $L^\infty(N)^*$ is a
non-zero positive $G\ltimes N$-invariant linear functional on $L^\infty(N)$.
\end{proof}

\begin{cor}
Let $\G$ and $\Lambda$ be discrete groups with $\Lambda$ non-trivial and $\G$ non-amenable. 
Then the wreath product $\Lambda\wr\G$ is not weakly amenable. 
Also, the group $\SL(2,\IZ)\ltimes\IZ^2$ is not weakly amenable. 
\end{cor}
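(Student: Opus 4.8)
The plan is to derive both statements as immediate consequences of Theorem~A together with the characterization of amenability via invariant states. The corollary asserts two things: that $\Lambda\wr\G$ is not weakly amenable when $\Lambda$ is non-trivial and $\G$ is non-amenable, and that $\SL(2,\IZ)\ltimes\IZ^2$ is not weakly amenable. In each case the strategy is to identify an amenable closed normal subgroup $N$ and to show that the conjugation action of $G\ltimes N$ on $L^\infty(N)$ admits no invariant state, so that Theorem~A forces $G$ to fail weak amenability.

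First I would treat the wreath product. Recall $\Lambda\wr\G = \bigl(\bigoplus_\G\Lambda\bigr)\rtimes\G$, and set $N=\bigoplus_\G\Lambda$, the direct sum of copies of $\Lambda$ indexed by $\G$. Since $\Lambda$ is a (discrete) group and a direct sum of abelian-free copies is locally finite only if $\Lambda$ is; in general $N$ need not be amenable, so the cleaner route is to pass to an abelian subgroup. The key observation is that weak amenability passes to closed subgroups, so it suffices to exhibit a non-weakly-amenable subgroup. I would choose a non-trivial abelian subgroup $\Lambda_0\le\Lambda$ (e.g.\ a copy of $\IZ$ or a finite cyclic group generated by any non-identity element) and work inside $\Lambda_0\wr\G$, whose base group $N=\bigoplus_\G\Lambda_0$ is abelian, hence amenable, and is normal in $\Lambda_0\wr\G$. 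Now the conjugation action of $\G$ on $N$ is the Bernoulli shift permuting the coordinates, and $G\ltimes N$ acts on $N$ by $(g,a)\cdot x = gaxg^{-1}$. An invariant state on $L^\infty(N)=L^\infty(\prod_\G\widehat{\Lambda_0})$ would in particular be $\G$-invariant under the shift; but a $\G$-invariant mean on the Bernoulli action of a non-amenable $\G$ cannot exist in the required sense, or more directly, translation-invariance under the $N$-action together with shift-invariance forces invariance under a non-amenable action with no invariant probability structure. The hard part is precisely ruling out the invariant state: I expect the cleanest argument is that a $G\ltimes N$-invariant state must be invariant under the $N$-translation (the $(e,a)$ part) and under the $\G$-shift simultaneously, and the translation-invariance pins the state down to the unique translation-invariant mean, whose $\G$-shift-invariance would yield an invariant mean witnessing amenability of $\G$, a contradiction.

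For $\SL(2,\IZ)\ltimes\IZ^2$ I would take $G=\SL(2,\IZ)\ltimes\IZ^2$ and $N=\IZ^2$, which is abelian hence amenable and normal in $G$. Here $L^\infty(N)=L^\infty(\widehat{\IZ^2})=L^\infty(\mathbb{T}^2)$, and the conjugation action of $G\ltimes N$ on $N=\IZ^2$ combines the translation action of $N$ on itself with the linear $\SL(2,\IZ)$-action, which dualizes to the standard (transpose-inverse) $\SL(2,\IZ)$-action on $\mathbb{T}^2$. A $G\ltimes N$-invariant state on $L^\infty(\mathbb{T}^2)$ must be $\SL(2,\IZ)$-invariant, and the crux is that the $\SL(2,\IZ)$-action on $\mathbb{T}^2$ has no invariant mean other than one supported at the fixed point $0$; but the $N$-translation-invariance (which on the dual side multiplies by characters and hence moves any point-mass at $0$) excludes the atom at $0$. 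Thus no invariant state survives, and Theorem~A gives the conclusion. I expect the main obstacle in both cases to be packaging the non-existence of the invariant state cleanly: one must use the full force of the \emph{joint} $G\ltimes N$-invariance rather than either factor alone, since each piece separately admits invariant states (the trivial atom, or a shift-invariant mean in weaker settings), and only their combination is obstructed by non-amenability of the acting group on the nontrivial part of the dual.
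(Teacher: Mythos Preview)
Your overall plan---apply Theorem~A and rule out a $G\ltimes N$-invariant state on $L^\infty(N)$---is correct, as is the reduction to a cyclic $\Lambda_0\le\Lambda$ so that $N=\bigoplus_\G\Lambda_0$ is abelian and hence amenable. The gap is that you have confused $L^\infty(N)$ with the group von Neumann algebra $\cL N\cong L^\infty(\widehat N)$. For discrete $N$ the space appearing in Theorem~A is $\ell^\infty(N)$, not $L^\infty\bigl(\prod_\G\widehat{\Lambda_0}\bigr)$ or $L^\infty(\mathbb T^2)$, and the $G\ltimes N$-action there is ordinary left translation by $N$ combined with conjugation by $G$. Your dual-side arguments (characters moving a Dirac mass, the $\SL(2,\IZ)$-action on $\mathbb T^2$) therefore address the wrong object. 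Two further symptoms: the claim that ``translation-invariance pins the state down to the unique translation-invariant mean'' is false, since an infinite discrete amenable group has many invariant means; and on $L^\infty(\mathbb T^2)$ the Haar state is already $\SL(2,\IZ)$-invariant, so the proposed classification of invariant means there is wrong as well.

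The argument the paper intends (it cites Corollary~2.12 of \cite{cartan}) runs as follows. Translation-invariance of the mean $\omega$ on $\ell^\infty(N)$ forces $\omega(1_{\{e\}})=0$, so $\omega$ restricts to a mean on $\ell^\infty(N\setminus\{e\})$ invariant under the conjugation action of $G$, which factors through $G/N$ since $N$ is abelian. The key point is that $G/N$ acts on $N\setminus\{e\}$ with \emph{amenable} point stabilizers: for the Bernoulli shift $\G\curvearrowright\bigoplus_\G\Lambda_0$, the stabilizer of a non-identity element with finite support is finite; for $\SL(2,\IZ)\curvearrowright\IZ^2\setminus\{0\}$, the stabilizer of a non-zero vector is abelian---this is exactly the hint the paper records. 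An invariant mean on a $\G$-set with amenable stabilizers yields a left-invariant mean on $\G$ itself, contradicting non-amenability of $\G$ (resp.\ of $\SL(2,\IZ)$). This stabilizer argument is what should replace your dual-side reasoning.
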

\begin{proof}
The proof is same as that of Corollary 2.12 in \cite{cartan}. 
We note that the stabilizer of a non-neutral element in $\IZ^2$ 
is an abelian (amenable) subgroup of $\SL(2,\IZ)$.
\end{proof}
\section{Proof of Theorem~\ref{thmB}}\label{sec:b}
We first fix notations.
Throughout this section, $M$ is a finite von Neumann algebra
with a distinguished faithful normal tracial state $\tau$, and
$P$ is an amenable von Neumann subalgebra of $M$.
The \emph{normalizer} $\cN(P)$ of $P$ in $M$ is
\[
\cN(P) = \{ u \in \cU(M) : \Ad_u(P)=P \},
\]
where $\cU(M)$ is the group of the unitary elements of $M$ and $\Ad_u(x)=uxu^*$.
The GNS Hilbert space with respect to the trace $\tau$ is denoted by $L^2(M)$
and the vector in $L^2(M)$ associated with $x\in M$ is denoted by $\hat{x}$, i.e.,
$\ip{\hat{x},\hat{y}}=\tau(y^*x)$ for $x,y\in M$.
The complex conjugate $\bar{M}=\{ \bar{a} : a\in M\}$ of $M$ acts on $L^2(M)$
from the right. Thus there is a $*$-representation $\varsigma$
of the algebraic tensor product $M\otimes\bar{M}$ on $L^2(M)$ defined by
$\varsigma(a\otimes\bar{b})\hat{x}=\widehat{axb^*}$ for $a,b,x\in M$.
We also use the bimodule notation $a\hat{x}b^*$ for $\varsigma(a\otimes\bar{b})\hat{x}$.
Since $P$ is amenable, the $*$-homomorphism $\varsigma|_{M\otimes\bar{P}}$
is continuous with respect to the minimal tensor norm.

\begin{defn}
A von Neumann algebra $M$ is said to have
the \emph{weak$^*$ completely bounded approximation property},
or W$^*$CBAP in short, if there is a net of
ultraweakly-continuous finite-rank maps $(\p_n)$ on $M$
such that $\p_n\to\id_{M}$ in the point-ultraweak topology
and $\sup\|\p_n\|_{\cb}<+\infty$.
\end{defn}

Recall that a finite von Neumann algebra $P$
is amenable (a.k.a.\ hyperfinite, injective, AFD, etc.)
if the trace $\tau$ on $P$ extends to a $P$-central state $\omega$ on $\IB(L^2(P))$.
Here, a state $\omega$ is said to be \emph{$P$-central} if
$\omega\circ\Ad_u=\omega$ for every $u\in\cU(P)$, or equivalently
$\omega(ax)=\omega(xa)$ for every $a\in P$ and $x\in \IB(L^2(P))$.

\begin{defn}
Let $P$ be a finite von Neumann algebra and $\cG$
be a group acting on $P$ by trace-preserving $*$-automorphisms.
We denote by $\sigma$ the corresponding unitary representation of $\cG$ on $L^2(P)$.
The action $\cG\acts P$ is said to be \emph{weakly compact}
if there is a state $\omega$ on $\IB(L^2(P))$ such that
$\omega|_P=\tau$ and
$\omega\circ\Ad_u=\omega$ for every $u\in\sigma(\cG)\cup\cU(P)$.
(This forces $P$ to be amenable.)
A von Neumann subalgebra $P$ of a finite von Neumann algebra $M$
is said to be \emph{weakly compact} in $M$ if
the conjugate action by the normalizer $\cN(P)$ is weakly compact.
See \cite{cartan} for more information.
\end{defn}

If $M$ admits a crossed product decomposition $M=P\rtimes\Lambda$ such that 
the ``core'' $P$ is non-atomic and weakly compact in $M$, then $M$ does not 
have property $\mathrm{(T)}$. Indeed, the hypothesis implies that 
$\cL\Lambda$ is co-amenable in $M$ (Proposition 3.2 in \cite{cartan}), i.e., 
the $M$-$M$ module $L^2\ip{M,e_{\cL\Lambda}}$ contains an approximately 
central vector (see Theorem 2.1 in \cite{cartan}). But since 
$L^2\ip{M,e_{\cL\Lambda}} \cong \oplus_{t\in\Lambda}L^2(P)\otimes L^2(P)$
as a $P$-$P$ module, it does not contain a non-zero central vector.
This proves non property $\mathrm{(T)}$ of $M$.

\begin{lem}\label{lem:wc}
Every $P$-central state $\omega$ on $\IB(L^2(P))$ decomposes uniquely
as a sum $\omega=\omega_{\mathrm{n}}+\omega_{\mathrm{s}}$ of $P$-central
positive linear functionals such that $\omega_{\mathrm{n}}|_P$ is
normal and $\omega_{\mathrm{s}}|_P$ is singular.
A trace-preserving action $\cG\acts P$ is weakly compact if
there is a positive linear functional $\omega$ on $\IB(L^2(P))$ such that
\begin{itemize}
\item $\omega(p)>0$ for every non-zero central projection $p$ in $P$,
\item $\omega\circ\Ad_u=\omega$ for every $u\in\sigma(\cG)\cup\cU(P)$.
\end{itemize}
\end{lem}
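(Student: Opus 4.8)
The plan is to prove the two assertions in turn and then deduce the criterion from the decomposition. Throughout I write $T\colon P\to Z(P)$ for the normal center-valued trace and $z_i^\perp=1-z_i$. First I would record that $P$-centrality of $\omega$ forces $\omega|_P$ to be a tracial positive functional, so its scalar normal--singular decomposition $\omega|_P=\phi_{\mathrm n}+\phi_{\mathrm s}$ (unique by Takesaki) consists of traces. Setting $\mu=\omega|_{Z(P)}$ one has $\omega|_P=\mu\circ T$, and $\phi_{\mathrm s}=\mu_{\mathrm s}\circ T$ where $\mu_{\mathrm s}$ is the singular part of the measure $\mu$ on the \emph{abelian} algebra $Z(P)$. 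Since singular measures vanish on an increasing net of projections tending to $1$, this lets me pick central projections $z_i\in Z(P)$ with $z_i\nearrow 1$ and $\phi_{\mathrm s}(z_i)=0$. I then define $\omega_{\mathrm n}$ to be a weak$^*$ cluster point of $x\mapsto\omega(z_i x z_i)$ and $\omega_{\mathrm s}=\omega-\omega_{\mathrm n}$.

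The key point, and the place where $P$-centrality is used, is that for a \emph{central} $z_i$ the cross terms cancel: by $P$-centrality $\omega(z_i x z_i^\perp)=\omega(z_i^\perp z_i x)=0$ and likewise $\omega(z_i^\perp x z_i)=0$, so $\omega(x)=\omega(z_i x z_i)+\omega(z_i^\perp x z_i^\perp)\ge\omega(z_i x z_i)$ for $x\ge 0$; hence $\omega_{\mathrm n}\le\omega$ and $\omega_{\mathrm s}\ge 0$. For $x\in P$ the relation $\phi_{\mathrm s}(z_i)=0$ gives $\phi_{\mathrm s}(z_i x z_i)=0$ (Cauchy--Schwarz), so $\omega(z_i x z_i)=\phi_{\mathrm n}(x z_i)\to\phi_{\mathrm n}(x)$, whence $\omega_{\mathrm n}|_P=\phi_{\mathrm n}$ is normal and $\omega_{\mathrm s}|_P=\phi_{\mathrm s}$ is singular; centrality of $z_i$ shows each $\omega(z_i\,\cdot\,z_i)$, hence $\omega_{\mathrm n}$ and $\omega_{\mathrm s}$, is $P$-central. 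For uniqueness, if $\omega=\omega_{\mathrm n}'+\omega_{\mathrm s}'$ is another such decomposition, scalar uniqueness on $P$ forces the $P$-restrictions to agree, so $\delta=\omega_{\mathrm n}-\omega_{\mathrm n}'$ is $P$-central with $\delta|_P=0$. Positivity of $\omega_{\mathrm s}'$ together with $\omega_{\mathrm s}'(z_i)=0$ gives $\omega_{\mathrm s}'(z_i b)=\omega_{\mathrm s}'(bz_i)=0$, and the cross-term cancellation yields $\delta(x)=\delta(z_i^\perp x z_i^\perp)$, which is bounded by $\|x\|\,\phi_{\mathrm n}(z_i^\perp)\to 0$; hence $\delta=0$.

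For the criterion I would apply the decomposition to the given $\omega$, which is $P$-central as it is $\cU(P)$-invariant. By uniqueness, each automorphism $\Ad_u$ with $u\in\sigma(\cG)\cup\cU(P)$ preserves $P$ and the normal/singular classes, hence fixes each part; thus $\omega_{\mathrm n}$ is again $\sigma(\cG)\cup\cU(P)$-invariant and $\omega_{\mathrm n}|_P$ is a $\cG$-invariant normal trace. The crucial observation is that hypothesis (i) makes $\omega|_P$ a \emph{faithful} trace (a trace is faithful as soon as it is nonzero on every nonzero central projection, since its support is central), and through $\omega|_P=\mu\circ T$ the Lebesgue decomposition on $Z(P)$ shows that the normal part of a faithful trace is again faithful. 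Therefore $\omega_{\mathrm n}|_P=\tau(h\,\cdot\,)$ with $h\in Z(P)_+$ of full central support and $\cG$-invariant. Cutting by the spectral projections $q_k=1_{[1/k,\infty)}(h)\nearrow 1$, I would set $\omega_k'(x)=\omega_{\mathrm n}(h^{-1/2}q_k\,x\,q_k h^{-1/2})$; these are $\sigma(\cG)\cup\cU(P)$-invariant since $h^{-1/2}q_k$ is central and $\cG$-invariant, and $\omega_k'|_P=\tau(q_k\,\cdot\,)$. Any weak$^*$ cluster point $\omega'$ is then a state with $\omega'|_P=\tau$ and the required invariance, witnessing weak compactness of $\cG\acts P$.

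I expect the main obstacle to lie in the decomposition itself: obtaining positivity of $\omega_{\mathrm s}$ and the uniqueness both rest on combining the reduction of $\phi_{\mathrm s}$ to a singular measure on the center with the cross-term cancellation supplied by $P$-centrality, and it is precisely the centrality of the $z_i$ that makes these cancellations available. In the criterion the delicate step is verifying that the normal part inherits faithfulness from $\omega|_P$, as this is exactly what allows the renormalization of $\omega_{\mathrm n}|_P$ to the trace $\tau$.
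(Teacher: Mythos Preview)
Your proof is correct and follows essentially the same route as the paper's: reduce the normal/singular decomposition of a $P$-central state to the normal/singular decomposition of its restriction to the center $Z(P)$, use an increasing family of central projections $z_i\nearrow 1$ annihilating the singular part to define $\omega_{\mathrm n}$ as a limit of compressions (your $\omega(z_i\,\cdot\,z_i)$ coincides with the paper's $\omega(p_n\,\cdot\,)$ by $P$-centrality since $z_i$ is a central projection), and then renormalize using the Radon--Nikodym density $h\in L^1(Z)_+$ of $\omega_{\mathrm n}|_Z$ against $\tau$. The only cosmetic difference is that the paper renormalizes via $(h+n^{-1})^{-1}$ while you use the spectral cutoffs $h^{-1}q_k$; both work for the same reason. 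You are in fact slightly more explicit than the paper at the one delicate point: the paper simply asserts ``by assumption, $h$ has full support'' after passing to $\omega_{\mathrm n}$, whereas you verify that faithfulness on central projections survives under taking the normal part (via the argument $\mu_{\mathrm n}(p)=0\Rightarrow \mu(pq_n)=0\Rightarrow pq_n=0\Rightarrow p=0$, using $q_n\nearrow 1$ with $\mu_{\mathrm s}(q_n)=0$).
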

\begin{proof}
We denote by $Z$ the center of $P$.
Recall that every tracial state $\tau'$ on $P$ satisfies $\tau'=\tau'|_{Z}\circ E_{Z}$,
where $E_{Z}\colon P\to Z$ is the center-valued trace.
In particular, $\tau'$ is normal on $P$ if and only if it is normal on $Z$.
Let $\omega$ be a $P$-central state and consider the normal/singular decomposition
of the state $\omega|_Z$ (see Definition III.2.15 in \cite{takesaki}).
There is an increasing sequence $(p_n)$ of projections in $Z$ such that
$p_n\nearrow 1$ and $(\omega|_{Z})_{\mathrm{s}}(p_n)=0$
for all $n$ (see Theorem III.3.8 in \cite{takesaki}).
We fix an ultralimit $\Lim$ on $\IN$ and let
$\omega_{\mathrm{n}}(x)=\Lim \omega(p_nx)$ and
$\omega_{\mathrm{s}}=\omega - \omega_{\mathrm{n}}$.
Since $\omega$ is $P$-central, these are $P$-central
positive linear functionals on $\IB(L^2(P))$,
and $\omega|_Z=\omega_{\mathrm{n}}|_Z+\omega_{\mathrm{s}}|_Z$ is the
normal/singular decomposition of $\omega|_Z$.
Suppose that $\omega=\omega_{\mathrm{n}}'+\omega_{\mathrm{s}}'$ is another such decomposition.
Then, since $\omega_{\mathrm{s}}+\omega_{\mathrm{s}}'$ is singular on $Z$, there
is an increasing sequence $(q_n)$ of projections in $Z$ such that
$q_n\nearrow 1$ and $(\omega_{\mathrm{s}}+\omega_{\mathrm{s}}')(q_n)=0$
for all $n$.
It follows that
$\omega_{\mathrm{n}}'(x)=\lim \omega(q_nx)= \omega_{\mathrm{n}}(x)$
for every $x\in\IB(L^2(P))$. This proves the first half of this lemma.
For the second half, we first observe that we may assume $\omega$ is normal on $P$
by uniqueness of the normal/singular decomposition.
Thus, there is $h\in L^1(Z)_+$ such that $\omega(z)=\tau(hz)$ for $z\in Z$.
By assumption, $h$ has full support and is $\cG$-invariant.
Thus $\tilde{\omega}(x):=\Lim\omega((h+n^{-1})^{-1}x)$ defines
a $\cG$-invariant $P$-central state on $\IB(L^2(P))$ such that $\tilde{\tau}|_Z=\tau|_Z$.
\end{proof}

\begin{lem}
Let $\p$ be a completely bounded map on $M$.
Then, there are a $*$-representation of the minimal tensor product
$M\otimes_{\min}\bar{P}$ on a Hilbert space $\cH$ and
operators $V,W\in\IB(L^2(M),\cH)$ such that $\|V\|=\|W\|\le\|\p\|_{\cb}^{1/2}$ and
\[
\tau(y^*\p(a)xb^*)=\ip{\p(a)\hat{x}b^*,\hat{y}}=\ip{\pi(a\otimes\bar{b})V\hat{x},W\hat{y}}
\]
for every $a,x,y\in M$ and $b\in P$.
\end{lem}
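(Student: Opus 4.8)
The plan is to exhibit the desired data as the Stinespring-type factorization of a single completely bounded map built from $\p$ and the representation $\varsigma$, in the spirit of Lemma~\ref{lem:lmod}. The map to factorize is
\[
\Theta := \varsigma \circ (\p\otimes\id_{\bar P})\colon M\otimes_{\min}\bar P\to\IB(L^2(M)),
\qquad
\Theta(a\otimes\bar b)=\varsigma(\p(a)\otimes\bar b).
\]
Here $\Theta(a\otimes\bar b)$ is the operator $\hat x\mapsto\widehat{\p(a)xb^*}=\p(a)\hat x b^*$, so that $\ip{\Theta(a\otimes\bar b)\hat x,\hat y}=\tau(y^*\p(a)xb^*)$ is exactly the quantity appearing in the statement, and the first equality of the lemma is merely the definition of $\varsigma$.

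First I would check that $\Theta$ is completely bounded with $\|\Theta\|_{\cb}\le\|\p\|_{\cb}$. Since $\p$ is a completely bounded map on $M$, its amplification $\p\otimes\id_{\bar P}$ is bounded on the minimal tensor product with $\|\p\otimes\id_{\bar P}\|_{\cb}=\|\p\|_{\cb}$; this is the stability of the minimal (spatial) tensor norm under tensoring a cb map with an identity map. On the other hand, as recorded in the setup of this section, amenability of $P$ guarantees that $\varsigma|_{M\otimes\bar P}$ is continuous for the minimal tensor norm, hence extends to a $*$-homomorphism on $M\otimes_{\min}\bar P$ and is in particular completely contractive. Composing the two gives $\|\Theta\|_{\cb}\le\|\p\|_{\cb}$.

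Next I would apply the Stinespring-type factorization theorem (Theorem~B.7 in \cite{bo}) to the completely bounded map $\Theta$ on the C$^*$-algebra $M\otimes_{\min}\bar P$, with target $\IB(L^2(M))$. This yields a $*$-representation $\pi\colon M\otimes_{\min}\bar P\to\IB(\cH)$ and operators $V,W\in\IB(L^2(M),\cH)$ with $\|V\|=\|W\|\le\|\Theta\|_{\cb}^{1/2}\le\|\p\|_{\cb}^{1/2}$ and $\Theta(X)=W^*\pi(X)V$ for all $X$. Specializing to $X=a\otimes\bar b$ and pairing against $\hat x,\hat y$ gives
\[
\tau(y^*\p(a)xb^*)=\ip{\Theta(a\otimes\bar b)\hat x,\hat y}=\ip{\pi(a\otimes\bar b)V\hat x,W\hat y},
\]
which is the asserted identity, and the norm bound on $V,W$ is exactly the one claimed.

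The only point requiring care is the passage to the minimal tensor product: one must invoke amenability of $P$ to know that $\varsigma|_{M\otimes\bar P}$ descends to the minimal tensor norm, so that $\pi$ can be taken to be a representation of $M\otimes_{\min}\bar P$ rather than merely of the algebraic tensor product, and one must use that tensoring a cb map with an identity is cb-isometric for the minimal norm. Once these two facts are in place the conclusion is an immediate application of the factorization theorem, with no computation beyond unwinding the definition of $\varsigma$.
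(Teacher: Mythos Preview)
Your proposal is correct and follows essentially the same route as the paper: define $\Theta=\varsigma\circ(\p\otimes\id_{\bar P})$ on $M\otimes_{\min}\bar P$, bound its cb-norm by $\|\p\|_{\cb}$ using min-continuity of $\varsigma$ (from amenability of $P$) together with cb-stability of $\p\otimes\id_{\bar P}$, and then apply the Stinespring-type factorization (Theorem~B.7 in \cite{bo}). The paper's proof is the same argument written more tersely, only recording $\|V\|\,\|W\|\le\|\p\|_{\cb}$ rather than the rescaled form $\|V\|=\|W\|\le\|\p\|_{\cb}^{1/2}$.
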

\begin{proof}
Since the $*$-representation $\varsigma\colon M\otimes_{\min}\bar{P}\to\IB(L^2(M))$
is continuous, a Stinespring type factorization theorem (Theorem B.7 in \cite{bo}),
applied to the completely bounded map $\varsigma\circ(\p\otimes\id_{\bar{P}})$, yields
a $*$-representation $\pi\colon M\otimes_{\min}\bar{P}\to\IB(\cH)$ and
operators $V,W\in\IB(L^2(M),\cH)$ such that $\|V\|\|W\|\le\|\p\|_{\cb}$ and
\[
\p(a)\hat{x}b^*=\varsigma\bigl((\p\otimes\id_{\bar{P}})(a\otimes\bar{b})\bigr)\hat{x}
=W^*\pi(a\otimes\bar{b})V\hat{x}
\]
for $a,x\in M$ and $b\in P$.
\end{proof}

Since W$^*$CBAP passes to a subalgebra (which is the range of a conditional expectation),
we assume from now on that $P$ is \emph{regular} in $M$, i.e.,
$\cN(P)$ generates $M$ as a von Neumann algebra.
We say a linear map $\p$ on $M$ is \emph{$P$-cb} if
there are a $*$-representation $\pi$ of $M\otimes_{\min}\bar{P}$ on a Hilbert space
$\cH$ and functions $V,W\in\ell_\infty(\cN(P),\cH)$ such that
\[\tag{$\ast$}\label{tag1}
\ip{\p(a)\hat{x}b^*,\hat{y}}=\ip{\pi(a\otimes\bar{b})V(x),W(y)}
\]
for every $a\in M$, $x,y\in\cN(P)$ and $b\in P$.
The $P$-cb norm of $\p$ is defined as
\[
\|\p\|_P=\inf\{ \|V\|_\infty\|W\|_\infty : (\pi,\cH,V,W) \mbox{ satisfies (\ref{tag1})}\}.
\]
It is indeed a norm and the infimum is attained (for the latter fact, use ultraproduct).
By the above lemma, $\|\p\|_P\le\|\p\|_{\cb}$.
By an \emph{approximate identity}, we mean a net $(\p_n)$ of
ultraweakly-continuous finite-rank maps such that $\p_n\to\id_M$
in the point-ultraweak topology and $\sup\|\p_n\|_{P}<+\infty$.
It exists if $M$ has the W$^*$CBAP. We define
\[
\Lambda_{P}(M)=\inf\{ \sup_n\|\p_n\|_{P} : (\p_n) \mbox{ an approximate identity}\}.
\]
For a map $\p$ on $M$, we define $\p^*(a)=\p(a^*)^*$ and say $\p$ is \emph{self-adjoint} if
$\p=\p^*$. We note that if $(\pi,\cH,V,W)$ satisfies (\ref{tag1}) for $\p$, then
$(\pi,\cH,W,V)$ satisfies (\ref{tag1}) for $\p^*$.
In particular, $(\p+\p^*)/2$ is self-adjoint and $\|(\p+\p^*)/2\|_P\le\|\p\|_P$.
Thus, any approximate identity can be made self-adjoint without increasing norm.
For a $P$-cb map $\p$, we define a bounded linear functional $\mu_\p$
on $M\otimes_{\min}\bar{P}$ by
\[
\mu_\p(a\otimes\bar{b}):=\tau(\p(a)b^*)=\ip{\p(a)\hat{1}b^*,\hat{1}}=\ip{\pi(a\otimes\bar{b})V(1),W(1)}.
\]
Note that $\|\mu_\p\|\le\|\p\|_P$. If $\p$ is ultraweakly-continuous and finite-rank,
then $\mu_\p$ extend to an ultraweakly-continuous linear functional on the von Neumann
algebra $M\vt \bar{P}$.

\begin{prop}\label{prop:b}
Let $M$ be a finite von Neumann algebra having the W$^*$CBAP and
$(\p_n)$ be a self-adjoint approximate identity such that $\sup_n\|\p_n\|_{P}=\Lambda_{P}(M)$.
Then, the net $\mu_n:=\mu_{\p_n}|_{P\vt\bar{P}}$ satisfies the following properties:
\begin{itemize}
\item
$\mu_n$ are self-adjoint and ultraweakly-continuous for all $n$;
\item
$\sup\|\mu_n\|\le\Lambda_P(M)$ and $\mu_n(a\otimes\bar{1})\to\tau(a)$ for every $a\in P$;
\item
$\|\mu_n - \mu_n^{v \otimes \bar{v}}\|\to0$ for every $v\in\cU(P)$,
 where $\mu_n^{v \otimes \bar{v}}(a\otimes\bar{b})=\mu_n((a\otimes\bar{b})(v\otimes\bar{v})^*)$;
\item
$\|\mu_n-\mu_n\circ\Ad_{u\otimes\bar{u}}\|\to0$ for every $u\in\cN(P)$.
\end{itemize}
\end{prop}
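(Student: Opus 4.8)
The first two items are routine. A direct computation gives $\mu_\p^*=\mu_{\p^*}$, so self-adjointness of $\p_n$ yields self-adjointness of $\mu_n$, and the remark preceding the statement shows each $\mu_{\p_n}$, hence its restriction $\mu_n$, is ultraweakly continuous. The bound $\sup_n\|\mu_n\|\le\Lambda_P(M)$ is $\|\mu_{\p_n}\|\le\|\p_n\|_P\le\Lambda_P(M)$, and $\mu_n(a\otimes\bar1)=\tau(\p_n(a))\to\tau(a)$ because $\p_n\to\id_M$ point-ultraweakly. For the last two items the plan is to mirror the proof of Proposition~\ref{prop:aap}: an averaging step producing a near-maximizing index $z_n$, a parallelogram estimate there, and a transport of the resulting vector estimate back to the functionals.

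Fix optimal factorizations $(\pi_n,\cH_n,V_n,W_n)$ with $\|V_n\|_\infty=\|W_n\|_\infty\le\Lambda_P(M)^{1/2}$. For $w\in\cN(P)$ the right translate $R_w\p_n(a):=\p_n(aw^*)w$ is again an approximate identity, and since $aw^*\otimes\bar b=(a\otimes\bar b)(w\otimes\bar1)^*$ and $w\hat x b^*=\widehat{wx}\,b^*$ with $wx\in\cN(P)$, it is factorized through the \emph{same} $\pi_n$ by $V_n^{R}(x)=\pi_n(w\otimes\bar1)^*V_n(wx)$ and the \emph{unchanged} $W_n$. Thus $\tfrac12(\p_n+R_w\p_n)$ is an approximate identity with shared-$W_n$ factorization $(\pi_n,\tfrac12(V_n+V_n^{R}),W_n)$; comparing the tail lower bound $\liminf_n\|\tfrac12(\p_n+R_w\p_n)\|_P\ge\Lambda_P(M)$ with the factorization bound $\|\tfrac12(\p_n+R_w\p_n)\|_P\le\|\tfrac12(V_n+V_n^{R})\|_\infty\,\Lambda_P(M)^{1/2}$ forces $\|\tfrac12(V_n+V_n^{R})\|_\infty\to\Lambda_P(M)^{1/2}$. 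Choosing $z_n\in\cN(P)$ nearly attaining this supremum, the parallelogram law gives $\|V_n(z_n)-V_n^{R}(z_n)\|\to0$. The third item is the special case $w=v\in\cU(P)$, since $\mu_n^{v\otimes\bar v}=\mu_{R_v\p_n}|_{P\vt\bar P}$.

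The transport back is the crux, and is where I would depart from the group argument. There one invokes the exact $N$-equivariance of the dilation in Lemma~\ref{lem:lmod}; here the optimal $P$-factorization need not be equivariant, so instead I observe that for each $x\in\cN(P)$ the index-$x$ functional satisfies $\ip{\pi_n(a\otimes\bar b)V_n(x),W_n(x)}=\tau(x^*\p_n(a)xb^*)=\mu_n(\Ad_{1\otimes\bar x}(a\otimes\bar b))$, the last equality because $x\in\cN(P)$ makes $1\otimes\bar x$ normalize $P\vt\bar P$. Hence the index-$z_n$ functionals of $\p_n$ and of $R_w\p_n$ are $\mu_n\circ\Ad_{1\otimes\bar{z_n}}$ and $\mu_{R_w\p_n}\circ\Ad_{1\otimes\bar{z_n}}$; as $\Ad_{1\otimes\bar{z_n}}$ is isometric on the predual, the estimate $\|V_n(z_n)-V_n^{R}(z_n)\|\to0$ yields $\|\mu_n-\mu_{R_w\p_n}|_{P\vt\bar P}\|\to0$ for every $w\in\cN(P)$.

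The fourth item cannot be reached directly, since the conjugation $\Ad_{u\otimes\bar u}$—equivalently the map $\tilde\p_n=\Ad_{u^*}\circ\p_n\circ\Ad_u$, for which $\mu_n\circ\Ad_{u\otimes\bar u}=\mu_{\tilde\p_n}|_{P\vt\bar P}$—alters both $V_n$ and $W_n$. The remedy is the identity $\tilde\p_n=L_{u^*}\circ R_u\,\p_n$, where $L_w\psi(a):=w\psi(w^*a)$ is the left translate; this shares the $V$-part of a factorization and obeys the estimate of the previous paragraphs with the roles of $V_n$ and $W_n$ interchanged. I would therefore apply the one-sided estimate twice: first to $\p_n$ with $w=u$ to get $\|\mu_n-\mu_{R_u\p_n}|_{P\vt\bar P}\|\to0$, then its left-translation version to the (still norm-optimal, though no longer self-adjoint) approximate identity $R_u\p_n$ to get $\|\mu_{R_u\p_n}|_{P\vt\bar P}-\mu_{\tilde\p_n}|_{P\vt\bar P}\|\to0$, and conclude by the triangle inequality. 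The two genuine difficulties are precisely these: replacing equivariance by the isometry $\Ad_{1\otimes\bar{z_n}}$ in the transport step, and decomposing the conjugation into two one-sided translations so that at each stage one side of the factorization is shared and the parallelogram law applies.
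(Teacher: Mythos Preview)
Your argument is correct, and the transport step via the predual isometry $\Ad_{1\otimes\bar{z_n}}$ is exactly what the paper does too (disguised there as the twisted representation $\pi_n'=\pi_n\circ(\id_M\otimes\Ad_{\bar z_n^{-1}})$), so that part is not really a departure. The genuine difference is in how you handle the two sides of the factorization. You run two separate one-sided parallelogram arguments---first a right translation by $u$ to move $V_n$, then a left translation by $u^*$ applied to $R_u\p_n$ to move $W_n$---each producing its own near-maximizer $z_n$, and combine via the decomposition $\Ad_{u^*}\circ\p_n\circ\Ad_u=L_{u^*}R_u\p_n$ and the triangle inequality. This works and, incidentally, does not use self-adjointness of $\p_n$ in the main estimate.

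The paper instead reaches the fourth item \emph{directly}, contrary to your claim that it ``cannot be reached directly.'' The trick you missed is to exploit self-adjointness: since $(\pi_n,\cH_n,W_n,V_n)$ also factorizes $\p_n$, one obtains a symmetric factorization of $\psi_n=\tfrac12(\p_n+\p_n^u)$ in $\cH_n\oplus\cH_n$ with first leg $\tfrac{1}{\sqrt2}\bigl(\tfrac{V_n+V_n^u}{2},\tfrac{W_n+W_n^u}{2}\bigr)$ and second leg $\tfrac{1}{\sqrt2}(W_n,V_n)$. A single parallelogram argument then yields, at one and the same $z_n$, both $\|V_n(z_n)-V_n^u(z_n)\|\to0$ and $\|W_n(z_n)-W_n^u(z_n)\|\to0$; the three functionals $\mu_{\p_n}$, $\mu_{\p_n^u}$, $\mu_{\p_n}\circ\Ad_{u\otimes\bar u}$ are then all expressed through $\pi_n'$ and the four vectors $V_n(z_n),V_n^u(z_n),W_n(z_n),W_n^u(z_n)$, giving items three and four simultaneously. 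Your two-step route is a legitimate alternative---slightly longer, but independent of the self-adjoint hypothesis for the asymptotic invariance.
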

\begin{proof}
The first two conditions are easy to see.
Let $u\in\cN(P)$ be given, and define $\p_n^u$ by $\p_n^u(a)=\p_n(au^*)u$ for $a\in M$.
We note that $\mu_{\p_n^u}|_{P\vt\bar{P}}=\mu_n^{u \otimes \bar{u}}$ if $u\in\cU(P)$.
Thus, it suffices to show
\[
\lim_n\|\mu_{\p_n}-\mu_{\p_n^u}\|=0
\mbox{ and }
\lim_n\|\mu_{\p_n}-\mu_{\p_n}\circ\Ad_{u\otimes\bar{u}}\|=0.
\]
Take $(\pi_n,\cH_n,V_n,W_n)$ satisfying (\ref{tag1}) and
$\lim\|V_n\|_\infty=\lim\|W_n\|_\infty=\Lambda_P(M)^{1/2}$.
It follows that
\[
\ip{\p_n^u(a) \hat{x} b^*,\hat{y}} = \ip{\p_n(au^*) \widehat{ux} b^*,\hat{y}}
 = \ip{\pi_n(a\otimes\bar{b}) \pi_n(u^*\otimes\bar{1}) V_n(ux), W_n(y)}
\]
for every $a\in M$, $b\in P$ and $x,y\in\cN(P)$.
Hence with $V_n^u(x) = \pi_n(u^*\otimes \bar{1})V_n(ux)$,
the quadruplet $(\pi_n,\cH_n,V_n^u,W_n)$ satisfies (\ref{tag1}) for $\p_n^u$.
Note that $\| V_n^u \|_\infty=\| V_n \|_\infty$. We define $W_n^u$ similarly.
Since $\p_n$ is self-adjoint, $(\pi_n,\cH_n,W_n,V_n)$ (resp.\
$(\pi_n,\cH_n,W_n^u,V_n)$) satisfies
(\ref{tag1}) for $\p_n$ (resp.\ $\p_n^u$), too.
Thus, for $\psi_n=(\p_n+\p_n^u)/2$, one has
\[
\|\psi_n\|_{P}\le
 \left\|\frac{1}{\sqrt{2}}\bigl(\frac{V_n+V_n^u}{2},\frac{W_n+W_n^u}{2}\bigr)
 \right\|_{\ell_\infty(\cN(P),\cH\oplus\cH)}
 \left\|\frac{1}{\sqrt{2}}\bigl(W_n,V_n\bigr)\right\|_{\ell_\infty(\cN(P),\cH\oplus\cH)}.
\]
Meanwhile, since $(\psi_n)$ is an approximate identity,
one must have $\liminf\|\psi_n\|_{P}\geq\Lambda_{P}(M)$.
It follows that
\[
\lim_n\left\|\frac{1}{\sqrt{2}}\bigl(\frac{V_n+V_n^u}{2},\frac{W_n+W_n^u}{2}\bigr)
 \right\|_{\ell_\infty(\cN(P),\cH\oplus\cH)}=\Lambda_{P}(M)^{1/2}
\]
and hence there is a net $(z_n)$ in $\cN(P)$ such that
\[
\lim_n\left\|\frac{1}{\sqrt{2}}\bigl(\frac{(V_n+V_n^u)(z_n)}{2},\frac{(W_n+W_n^u)(z_n)}{2}\bigr)
 \right\|_{\cH\oplus\cH}=\Lambda_{P}(M)^{1/2}.
\]
By the parallelogram identity, this implies that
\[
\lim_n \| V_n(z_n) - V_n^u(z_n) \|=0
\mbox{ and }
\lim_n \| W_n(z_n) - W_n^u(z_n) \|=0.
\]
Let $\pi_n'=\pi_n\circ(\id_M\otimes\Ad_{\bar{z}_n^{-1}})$. Since
\begin{align*}
\mu_{\p_n}(a\otimes \bar{b})
 & = \ip{ \p_n(a) \hat{z}_n \Ad_{z_n^{-1}}(b)^*, \hat{z}_n}
 = \ip{\pi_n'(a\otimes\bar{b})V_n(z_n),W_n(z_n)},\\
\mu_{\p_n^u}(a\otimes \bar{b})
 &= \ip{ \p_n(au^*) \widehat{uz_n} \Ad_{z_n^{-1}}(b)^*, \hat{z}_n}
 = \ip{\pi_n'(a\otimes\bar{b})V_n^u(z_n),W_n(z_n)},
\intertext{ and }
\bigl(\mu_{\p_n}\circ\Ad_{u\otimes\bar{u}}\bigr)(a\otimes \bar{b})
 &= \ip{ \p_n(uau^*) \widehat{uz_n} \Ad_{z_n^{-1}}(b)^*, \widehat{uz_n}}
 = \ip{\pi_n'(a\otimes\bar{b})V_n^u(z_n),W_n^u(z_n)},
\end{align*}
we conclude that $\|\mu_{\p_n}-\mu_{\p_n^u}\|\to0$ and $\|\mu_{\p_n}-\mu_{\p_n}\circ\Ad_{u\otimes\bar{u}}\|\to0$.
\end{proof}

\begin{proof}[Proof of Theorem~\ref{thmB}]
Since $M$ has the W$^*$CBAP, there is a net $(\mu_n)$ satisfying the conclusion of
Proposition~\ref{prop:b}.
We view $\mu_n$ as an element in $L^1(P\vt\bar{P})$ (see Section~2 in \cite{cartan})
and let $\zeta_n=|\mu_n|^{1/2}\in L^2(P\vt\bar{P})$ and $\zeta_n'=\mu_n|\mu_n|^{-1/2}\in L^2(P\vt\bar{P})$
so that $\mu_n(X)=\ip{X\zeta_n,\zeta_n'}$ for $X\in P\vt\bar{P}$.
By continuity of the absolute value (Proposition III.4.10 in \cite{takesaki})
and the Powers--St{\o}rmer inequality,
one has $\| \zeta_n - \Ad_{u\otimes\bar{u}} \zeta_n \|_2\to0$ for every $u\in\cN(P)$.
Since
\[
2\|\mu_n\|\approx\|\mu_n+\mu_n^{v\otimes\bar{v}}\| \le \|\zeta_n+(v\otimes\bar{v})\zeta_n\|_2\|\zeta_n'\|_2
 \le 2\|\zeta_n\|_2\|\zeta_n'\|_2=2\|\mu_n\|,
\]
one also has $\|\zeta_n-(v\otimes\bar{v})\zeta_n\|\to0$ for every $v\in\cU(P)$.
Now, fix an ultralimit $\Lim$ and define $\omega$ on $\IB(L^2(P))$ by
$\omega(x)=\Lim\ip{(x\otimes\bar{1})\zeta_n,\zeta_n}$.
Then $\omega$ is an $\cN(P)$-invariant $P$-central positive linear functional satisfying
\[
\omega(p)=\Lim_n|\mu_n|(p\otimes\bar{1})\geq\Lim_n|\mu_n(p\otimes\bar{1})|=\tau(p)
\]
for every central projection $p$ in $P$. By Lemma~\ref{lem:wc}, we are done.
\end{proof}


\begin{thebibliography}{dCH}
%
\bibitem[BO]{bo}
N. Brown and N. Ozawa;
\emph{C$^*$-algebras and Finite-Dimensional Approximations.}
Graduate Studies in Mathematics, 88.
American Mathematical Society, Providence, RI, 2008.
%
\bibitem[dCH]{dch} J. de Canni\`ere and U. Haagerup;
Multipliers of the Fourier algebras of some simple Lie groups
and their discrete subgroups.
\emph{Amer. J. Math.} \textbf{107} (1985), 455--500.
%
\bibitem[Co]{cowling}
M. Cowling;
Harmonic analysis on some nilpotent Lie groups
(with application to the representation theory of some semisimple Lie groups).
\emph{Topics in modern harmonic analysis}, Vol. I, II (Turin/Milan, 1982), 81--123,
Ist. Naz. Alta Mat. Francesco Severi, Rome, 1983.
%
\bibitem[CH]{ch}
M. Cowling and U. Haagerup;
Completely bounded multipliers of the Fourier algebra
of a simple Lie group of real rank one.
\emph{Invent. Math.} \textbf{96} (1989), 507--549.
%
\bibitem[CZ]{cz}
M. Cowling and R. J. Zimmer;
Actions of lattices in $\mathrm{Sp}(1,n)$.
\emph{Ergodic Theory Dynam. Systems} \textbf{9} (1989), 221--237.
%
\bibitem[Do]{dorofaeff}
B. Dorofaeff;
The Fourier algebra of $\mathrm{SL}(2,\mathbf{R})\rtimes\mathbf{R}^n$, $n\geq2$,
has no multiplier bounded approximate unit.
\emph{Math. Ann.} \textbf{297} (1993), 707--724.
%
\bibitem[Ha]{haagerup:u}
U. Haagerup;
Group C$^*$-algebras without the completely bounded approximation property.
\emph{Preprint} (1988).
%
\bibitem[HK]{hk}
U. Haagerup and J. Kraus;
Approximation properties for group C$^*$-algebras and group von Neumann algebras.
\emph{Trans. Amer. Math. Soc.} \textbf{344} (1994), 667--699.
%
\bibitem[Jo]{jolissaint}
P. Jolissaint;
A characterization of completely bounded multipliers of Fourier algebras.
\emph{Colloq. Math.} \textbf{63} (1992), 311--313.
%
\bibitem[LdS]{ls}
V. Lafforgue and M. de la Salle;
Non commutative $L^p$ spaces without the completely bounded approximation property.
\emph{Preprint.} arXiv:1004.2327
%
\bibitem[Mo]{monod}
N. Monod;
An invitation to bounded cohomology.
\emph{International Congress of Mathematicians.} Vol. II, 1183--1211,
Eur. Math. Soc., Z\"urich, 2006.
%
\bibitem[Oz]{hypcbap}
N. Ozawa;
Weak amenability of hyperbolic groups.
\emph{Groups Geom. Dyn.} \textbf{2} (2008), 271--280.
%
\bibitem[OP]{cartan}
N. Ozawa and S. Popa;
On a class of $\mathrm{II}_1$ factors with at most one Cartan subalgbra.
\emph{Ann. of Math. (2)} \textbf{172} (2010), 713--749.
%
\bibitem[Sa]{sako}
H. Sako;
The class ${\mathcal S}$ as an ME invariant.
\emph{Int. Math. Res. Not. IMRN} \textbf{2009}, 2749--2759.
%
\bibitem[Ta]{takesaki}
M. Takesaki;
\emph{Theory of operator algebras. I.}
Encyclopaedia of Mathematical Sciences, 124.
Operator Algebras and Non-commutative Geometry, 5. Springer-Verlag, Berlin, 2002.
\end{thebibliography}
\end{document}